\numberwithin{equation}{section}
\theoremstyle{plain}
\newtheorem{problem}{Problem}
\newcommand{\eas}{e.a.s.}
\newcommand{\ellht}{\ell^{\mathsf{HT}}}
\newcommand{\ellip}{\ell^{\mathsf{IP}}}
\newcommand{\ellol}{\ell^{\mathsf{OL}}}
\begin{document}

\begin{frontmatter}
\title{Prediction with eventual almost sure guarantees}
\runtitle{Prediction with eventual almost sure guarantees}

\begin{aug}
\author[A]{\fnms{Changlong} \snm{Wu}\ead[label=e1, mark]{wuchangl@hawaii.edu}}
\and
\author[A]{\fnms{Narayana} \snm{Santhanam}\ead[label=e2, mark]{nsanthan@hawaii.edu }}


\address[A]{Department of Electrical Engineering,
University of Hawai'i at Manoa,
\printead{e1,e2}}

\end{aug}

\begin{abstract}
We study the problem of sequentially predicting properties of a probabilistic model and its next outcome over an infinite horizon, with the goal of ensuring that the predictions incur only finitely many errors with probability 1. We introduce a general framework that models such prediction problems, provide general characterizations for the existence of successful prediction rules, and demonstrate the application of these characterizations through several concrete problem setups, including hypothesis testing, online learning, and risk domination. In particular, our characterizations allow us to recover the findings of Dembo and Peres (1994) with simple and elementary proofs, and offer a partial resolution to an open problem posed therein.
\end{abstract}

\begin{keyword}[class=MSC]
\kwd[Primary ]{62A01}
\kwd{62M20}
\kwd[; secondary ]{62L12}
\end{keyword}

\end{frontmatter}

\section{Introduction}


Suppose we sequentially observe a sample \(X_1, X_2, \ldots\) from an unknown probabilistic model. At each time step \(n\), we make a prediction about either a property of the underlying model or the next sample \(X_{n+1}\), based on the current observation \(X_1, \ldots, X_n\) of the sample. There is a predefined binary \(\{0,1\}\)-loss that measures whether the prediction is correct/acceptable or not. The natural questions then are: under what conditions will we be able to incur only finitely many losses with probability 1, so that the predictions are eventually correct almost surely? If we can do so, can we identify from what point we will not incur any more losses?

This problem setup was initiated by Cover~\cite{cover1973determining}. In Cover's paper, the objective is to predict the irrationality of the mean of an underlying distribution over \([0,1]\) using $i.i.d.$ samples of it. The prediction can be updated after every observation, but the learner is allowed only finitely many errors with probability 1. It is shown in~\cite{cover1973determining} that such a prediction rule exists for distributions with means in a subset of \([0,1]\) that contains all rational numbers and has Lebesgue measure 1. Cover's setup was generalized substantially by Dembo and Peres~\cite{dembo1994topological} to identify general properties of distributions over \(\mathbb{R}^d\) using a topological criterion. Similar problem setups were continued in~\cite{kulkarni1994paradigm, koplowitz1995cover, leshem2006cover, naaman2016almost}. However, all of these setups considered only the classification of the underlying distribution using i.i.d. samples.

In~\cite{wu2019isit}, the authors considered a problem setup of predicting upper bounds on the \emph{next} observation of i.i.d. samples from distributions over \(\mathbb{N}\), such that the next observation violates the bound only finitely often with probability 1. In~\cite{JMLR:v16:santhanam15a}, the authors obtain a stopping rule that additionally indicates when such a prediction has made the last mistake.

This paper introduces a general framework that encompasses the previously mentioned problem setups within a unified analysis framework. Informally, we consider the following general prediction setup. The observations are modeled as a general discrete-time random process \(X_1, X_2, \ldots\) whose underlying probability measure (not necessarily i.i.d.) \(p\) belongs to a known collection \(\mathcal{P}\). The learning process is a game between Nature and a learner, where the learner attempts to predict a property of \(p\) or future observations. Nature fixes a random process \(p \in \mathcal{P}\) at the beginning of the game. At each time step \(n\), the learner makes a prediction \(Y_n\) using \(X_1, \ldots, X_{n-1}\). The prediction, the next realization \(X_n\), and potentially the underlying process \(p\) are associated with a binary 0-1 loss \(\ell\), where 1 indicates an error/unsatisfactory prediction. The collection \(\mathcal{P}\) together with the loss \(\ell\) is said to be \emph{eventually almost surely} (or \(\eas\)) predictable if there is a strategy such that the learner makes only \emph{finitely} many errors with probability \(1\) no matter what the underlying \(p \in \mathcal{P}\) is.

Our contributions in this paper establish a comprehensive theoretical
framework for the $\eas$-predictability. We summarized our main results in the follows.

\subsection{Summary of results}

Our first main result, Theorem~\ref{main1b}, demonstrates that the \(\eas\)-predictability of a model class is, in many cases, equivalent to a decomposition of the class into nested \(\eta\)-predictable subclasses. These are model classes where we can predict with a \emph{bounded} number of errors and with confidence \(\geq 1-\eta\) uniformly over all models in the class. Therefore, this result has the context of matching sample size to complexity—as the sample size increases, we address larger subsets of the model class. Put another way, this implies that such \(\eas\)-predictable estimators can be obtained through regularization based on the sample at hand.

We illustrate how Theorem~\ref{main1b} can be applied to transform seemingly impossible tasks, as posed by Cover~\cite{cover1973determining}, into essentially trivial problems in Example~\ref{eg:cover}. In Section~\ref{exam}, we demonstrate further applications of Theorem~\ref{main1b} across various contexts—hypothesis testing, risk domination, and online learning, to name a few. Notably, Theorem~\ref{main1b} enables us to rederive all the main results of Dembo and Peres~\cite{dembo1994topological} with straightforward and elementary proofs (see Section~\ref{hypotest} and Appendix~\ref{alternat}). Moreover, we partially resolve an open problem posed by Dembo and Peres~\cite{dembo1994topological} in Section~\ref{hypotest}.

Theorem~\ref{main1b}, while broadly applicable, does not always characterize \(\eas\)-predictability in the most general settings—indeed, we present counterexamples for the same. We then refine Theorem~\ref{main1b} for several special cases. Theorem~\ref{main1a} completely characterizes the \(\eas\)-predictability in the \emph{supervised setting}, i.e., a setting when the losses are observable from samples, using methods reminiscent of structural risk minimization. In Theorem~\ref{pureestimation}, we establish a different tight characterization of the \(\eas\)-predictability for \emph{pure estimation} problems with i.i.d. sampling and finite prediction domain. This result forms the basis of the almost sure hypothesis testing setup of Dembo and Peres~\cite{dembo1994topological} that we will study in detail in Section~\ref{hypotest}.

Finitely many errors do not imply that there exists a method, a stopping rule, that identifies when we are past the point of the final error. Indeed, in many cases, such a stopping rule is impossible. Section~\ref{ch2sec4} addresses this refinement with the notion of \(\eas\)-learnability. Informally, \(\eas\)-learnability of a model class requires that for any given confidence parameter \(\eta > 0\), we are able to find a prediction rule and stopping rule such that the probability of the prediction rule making errors after the stopping rule has ceased is upper bounded by \(\eta\). We provide a characterization of \(\eas\)-learnability in Theorem~\ref{main2b} through a notion of \emph{identifiability}. This characterization is tight when the processes are i.i.d.

Lastly, we introduce other natural variations of the \(\eas\)-predictability framework in Section~\ref{ch2sec5}, including a notion of weak \(\eas\)-predictability and the notion of predictability with finite expected loss. We show that predictability with finite expected loss implies \(\eas\)-predictability, and that \(\eas\)-predictability implies weak \(\eas\)-predictability. The converses do not hold, and these formulations are essentially different—we also provide examples that illustrate these nuances. We conclude the paper with several open problems that concern the exact characterizations of the above variations.

\subsection{Related work} Perhaps the most related non-uniform consistency setup in the learning theory literature is non-uniform PAC learning~\cite{blumer1989learnability, benedek1994nonuniform} and the concept of \emph{Structural Risk Minimization} (SRM)~\cite{vcnonuniform, vapnik2013nature}. In this setup, the goal is to obtain a \emph{distribution-free} sample complexity in the PAC model that could depend on the underlying hypothesis. It has been shown by~\cite{linial1991results, benedek1994nonuniform} that a hypothesis class can be learned in such a sense in the \emph{realizable} setting if and only if the class can be decomposed into a countable union of finite VC-dimensional classes. ~\cite{shawe1998structural} generalized the result to the \emph{agnostic} setup using SRM. There has also been research in the \emph{distribution-dependent} setting along this line, aiming to quantify the \emph{learning rates} of a hypothesis class with respect to the sample size~\cite{schuurmans1997characterizing, antos1998strong, bousquet2020theory}. In particular, ~\cite{bousquet2020theory} recently showed that under mild assumptions on the realizability of the underlying generating model, the learning rates can only asymptotically approach \(e^{-n}\), \(\frac{1}{n}\), or be arbitrarily slow. Our framework is also related to the learning of \emph{recursive functions} (a.k.a. inductive inference), introduced by Gold~\cite{gold1967language}. A typical setup starts with a set \(\mathcal{H}\) of functions that map \(\mathbb{N} \rightarrow \{0,1\}\). Nature fixes a function \(h\) from \(\mathcal{H}\) at the beginning. At each time step \(n\), the learner attempts to predict \(h(n)\) using the history \(h(1), \ldots, h(n-1)\) observed thus far. Nature then reveals the true value \(h(n)\) after the learner has made the prediction. The goal is a \emph{computable} learner that makes only \emph{finitely} many errors no matter what \(h \in \mathcal{H}\) is. It is shown by~\cite{barzdicnvs1972prediction} that a finite error computable predictor exists if and only if there exists a computable function \(g: \mathbb{N} \rightarrow \mathbb{N}\) such that the time complexity of \(\mathcal{H}\) is \emph{eventually dominated} by \(g\). We refer the reader to~\cite{gold1967language, barzdicnvs1972prediction, blum1975toward, zeugmann2008learning} for more results along this line. One may view the learning of recursive functions as a special case of our \(\eas\)-predictability where the underlying process is \emph{deterministic} but restricts the learner to be computable.

\section{Problem setup}
\label{ch2sec2}
Let $\mathcal{X}$ be a Polish space (separable completely metrizable topological space) with Borel $\sigma$-algebra and $\mathcal{P}$ be a collection of
probability measures over the cylinder $\sigma$-algebra over
$\mathcal{X}^{\infty}$. We consider a discrete time random process
$\X=\{X_n\}_{n\in \mathbb{N}^+}$ generated by sampling from a
probability law $p\in \mathcal{P}$. We will denote $X_i^j=(X_i,X_{i+1},\cdots,X_j)$ in the sequel.

Prediction is modeled as a measurable function $\Phi:\cX^*\to\cY$, where $\cX^*$
denotes the set of all finite strings of sequences from $\cX$, and
$\cY$ is the set of all predictions.  The \emph{loss} is a function
$\ell:\cP\times\mathcal{X}^*\times\mathcal{Y}\rightarrow \{0,1\}$ such that $\ell(p,\cdot,\cdot)$ is measurable for all $p\in \mathcal{P}$. We
consider the property we are estimating to be defined implicitly by
the subset of $\cP\times\mathcal{X}^*\times\mathcal{Y}$ where
$\ell=0$, and therefore, in a slight abuse of notation sometimes refer
to $\ell$ as a \emph{property} as well.

We consider the following game that proceeds in time indexed by
$\mathbb{N}^+$. The game has two parties: the Learner and Nature.
Nature chooses some model $p\in \cP$ to begin the game. At each time
step $n$, the Learner makes a prediction $Y_n=\Phi(X_1^{n-1})$
based on the current observation $X_1^{n-1}$ generated according to $p$.
Nature then generates $X_n$ based on $p$ and $X_1^{n-1}$.  

The Learner fails at step $n$ if $\ell(p, X_1^n,Y_n)=1$. The Learner
targets a strategy that minimizes the cumulative loss in the infinite
horizon, without knowledge of the model that Nature chooses at the
beginning.

The loss in general can be any function of the probability model in
addition to the sample observed, and our prediction on the sample.
When the loss depends on the probability model, there may be no direct
way to estimate the loss incurred at say, step $n$, from observations
of the sample $X_1^{n}$ even after the prediction $Y_n$ is made.
We call such setups the \emph{unsupervised setting} borrowing from
learning theory.  A special case is the \emph{supervised} setting,
where we define the loss to be a function from $\cX^*\times \cY$ to
$\sets{0,1}$.

\begin{definition}[$\eta$-predictability]
\label{etap}
A collection $(\cP,\ell)$ is
  $\eta$-predictable, if there exists a prediction
  rule $\Phi:\cX^*\to \cY$ and a sample size $n$ such that for all
  $p\in \cP$, 
\[
p
\Paren{
\sum_{i=n}^{\infty}\ell(p,X_1^i,\Phi(X_1^{i-1}))>0}
\le 
\eta,
\]
i.e. the probability that the learner makes errors after step $n$ is at
most $\eta$ uniformly over $\cP$.
\end{definition}

\begin{example}
\label{introexample1}
Let $\mathcal{P}^{\epsilon}$ be the class of all $i.i.d.$ Bernoulli processes with parameters in $\{p\in [0,1]: |p-\frac{1}{2}|\ge \epsilon\}\cup\{\frac{1}{2}\}$ for some absolute constant $\epsilon>0$. We define the loss to be $\ell(p,X_1^n,Y_n)=1\{Y_n\not=1\{p=\frac{1}{2}\}\}$, i.e., we would like to determine whether the underlying parameter equals $\frac{1}{2}$ or not. By Chernoff bound, we know that $(\mathcal{P}^{\epsilon},\ell)$ is $\eta$-predictable for all $\eta,\epsilon>0$, by predicting the (same) empirical mean of $X_1^N$ for all time step $>N$ where $N$ is any constant $\ge \frac{2\log 1/\eta}{\epsilon^2}$.
\end{example}

\begin{definition}[$\eas$-predictable]
\label{easp}
A collection $(\cP,\ell)$ is said to be eventually almost surely (e.a.s.)-predictable, if there exists a prediction rule $\Phi$, such that for all $p\in \cP$
\[
p
\left(
\sum_{n=1}^{\infty}\ell(p,X_1^n,\Phi(X_1^{n-1}))<\infty
\right)=1.
\]
\end{definition}

We need a technical definition that will help simplify notation
further.

\begin{definition} A \emph{nesting} of $\cP$ is a collection of subsets
of $\cP$, $\sets{\cP_i: i\ge 1}$ such that $\cP_1\subset \cP_2\subset\ldots$
and $\bigcup_{i\ge1} \cP_i = \cP$.
\end{definition}

The following lemma characterize immediate connections between
the above definitions.
\begin{lemma}
\label{union2eas}
Let $\mathcal{P}$ be a collection of models, $\{\mathcal{P}_i,i\ge 1\}$ be a nesting of $\mathcal{P}$. If for all $\eta>0$ and $i\in \mathbb{N}^+$, $(\mathcal{P}_i,\ell)$ is $\eta$-predictable. Then $(\mathcal{P},\ell)$ is $\eas$-predictable.
\end{lemma}
\begin{proof}
  From the definition of $\eta$-predictability, we can choose an
  increasing sequence $\sets{b_i, i\ge 1}$, and predictors $\Phi_i$
  for $\cP_i$ respectively as follows.
  For all $i$ and for all $p\in\cP_i$, the probability
  $\Phi_i$ makes errors after step $b_i$ is at most $2^{-i}$.

  The predictor $\Phi$ is then constructed from $\sets{\Phi_i, i\ge1}$
  as follows: use predictor $\Phi_i$ when the length $T$ of the
  observed sample satisfies $b_i\le T<b_{i+1}$.

  Let $p\in\mathcal{P}_k\subset\cP$. Because the collections $\cP_i$ are
  nested, for all $i\ge k$, $p\in\cP_i$. During the phase $\Phi$
  coincides with $\Phi_i$, the probability of $\Phi$ making an error
  is $\le 2^{-i}$. The result follows using the Borel-Cantelli lemma.
\end{proof}

\begin{example}
\label{introexample2}
Let $\mathcal{P}$ be the class of all \iid Bernoulli processes with parameters in $[0,1]$ and $\ell$ is the same loss as in Example~\ref{introexample1}. We claim that $(\mathcal{P},\ell)$ is $\eas$-predictable using Lemma~\ref{union2eas}. For any $i\ge 1$, we define $\mathcal{P}_i=\mathcal{P}^{1/i}$, where $\mathcal{P}^{1/i}$ is defined as in Example~\ref{introexample1} with $\epsilon=\frac{1}{i}$. Clearly, $\{\mathcal{P}_i,i\ge 1\}$ forms a nesting of $\mathcal{P}$. By Example~\ref{introexample1}, we also know that each $\mathcal{P}_i$ is $\eta$-predictable for all $\eta>0$, the claim follows.
\end{example}

\paragraph*{Refined notions of nestings} Motivated by the above lemma, we introduce the following two refinements on the notion of nestings.

\begin{definition}[Universal nestings]
A nesting $\{\mathcal{P}_i,i\ge 1\}$ of $\mathcal{P}$ is a \emph{universal} nesting w.r.t loss $\ell$, if for all $\eta>0$ and $i\ge 1$, $(\mathcal{P}_i,\ell)$ is $\eta$-predictable.
\end{definition}

\begin{definition}[$\eta$-nestings]
For any $\eta>0$, a nesting $\{\mathcal{P}_i^{\eta},i\ge 1\}$ of $\mathcal{P}$ is an \emph{$\eta$-nesting} w.r.t. loss $\ell$, if for all $i\ge 1$, $(\mathcal{P}_i^{\eta},\ell)$ is $\eta$-predictable.
\end{definition}

Clearly, if a class $\mathcal{P}$ has a universal nesting then it has $\eta$-nesting for all $\eta>0$. However, the converse is not true, as we will see in Section~\ref{sec:easp}. Indeed, as we will see in Theorem~\ref{main1b} that these two concepts will form the basis for characterizing $\eas$-predictability.

The following technical lemma will be useful in our following proofs.

\begin{lemma}
\label{decomeq}
Let $\mathcal{P}$ be a collection of probability measures and let
$\ell$ be a loss function. Suppose 
$\{\mathcal{P}_i:i\ge 1\}$ is a nesting of $\cP$ such that for all
$i\ge 1$, $(\cP_i,\ell)$ is $\eta$-predictable for some $\eta>0$.

Then there exists a relabeling of the sets in a nesting
$\{\cP'_{i}:i\ge 1\}$ of $\cP$ such that $(\cP'_{i},\ell)$ is
$\eta$-predictable with sample size $i$.  Namely, there is a
predictor $\Phi_i$ such that for all $p\in\cP'_{i}$, the probability
$\Phi_i$ incurs non-zero $\ell$-loss on samples with size larger than
$i$ is $\le \eta$.
\end{lemma}
\begin{proof}
  Since $\cP_i$ is $\eta$-predictable, there exists a number $n_i'$
  and a predictor $\Phi_i$ such that for all $p\in\cP_i$, the
  probability $\Phi_i$ incurs non-zero $\ell-$loss on samples larger
  than $n_i'$ is $\le \eta$. We can therefore choose an increasing
  sequence $\sets{n_i, i\ge 1}$ with $n_i\ge n_i'$. Note that each
  $\cP_i$ is $\eta$-predictable with sample size $n_i$. For
  $n_k\le i<n_{k+1}$, set $\cP_i'=\cP_k$. $\sets{\cP_i': i\ge 1}$ is
  the desired nesting and the lemma follows.
\end{proof}

\section{Characterization of $\eas$-predictability}
\label{sec:easp}
We first provide a general characterization of the $\eas$-predictability without any assumption on the class $\mathcal{P}$ and loss $\ell$.

\begin{theorem} 
\label{main1b}
Consider a collection $\mathcal{P}$ with a loss
$\ell:\cP\times\cX^*\times \cY\to \sets{0,1}$. $(\cP,\ell)$ is $\eas$-predictable if there exists a nesting
$\sets{\cP_i : i\ge 1}$ of $\cP$ such that for all $\eta>0$, $i\ge 1$, $(\cP_{i},\ell)$ is
$\eta$-predictable.

Conversely, if $(\cP,\ell)$ is $\eas$-predictable, then for all $\eta>0$, there is a
nesting $\sets{\cP_i^\eta : i\ge 1}$ of $\cP$ such that for all $i\ge 1$, $(\cP_i^\eta,\ell)$
is $\eta$-predictable.
\end{theorem}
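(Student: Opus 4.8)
For the forward implication, the plan is a block-splicing argument. Given a nesting $\cP_1\subset\cP_2\subset\cdots$ with $\bigcup_i\cP_i=\cP$ for which each $(\cP_i,\ell)$ is $\eta$-predictable for every $\eta>0$, I would apply the hypothesis to the level $\cP_k$ with tolerance $\eta=2^{-k}$ to obtain a rule $\Phi_k$ and a warm-up time $m_k$ with $p\big(\sum_{j\ge m_k}\ell(p,X_1^j,\Phi_k(X_1^{j-1}))>0\big)\le 2^{-k}$ for all $p\in\cP_k$. Then pick times $T_1<T_2<\cdots\to\infty$ with $T_k>m_k$ (possible since each $m_k$ is a fixed integer), and let $\Phi$ run $\Phi_k$ on the block $[T_k,T_{k+1})$, feeding it the full observed history, and predict arbitrarily before $T_1$.

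To see this works, fix $p\in\cP$ and let $i_0$ be least with $p\in\cP_{i_0}$, so $p\in\cP_k$ for every $k\ge i_0$ by the nesting property. Writing $E_k$ for the event that $\Phi$ errs somewhere in $[T_k,T_{k+1})$, for $k\ge i_0$ we have $p(E_k)\le 2^{-k}$ because $[T_k,T_{k+1})\subseteq[m_k,\infty)$ and $p\in\cP_k$. Summability of $2^{-k}$ and Borel--Cantelli give that $p$-a.s. only finitely many $E_k$ occur; since only finitely many steps precede block $i_0$, these contribute finitely many losses deterministically, so the total loss is finite with $p$-probability $1$, which is \eas-predictability.

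For the converse, suppose $\Phi$ witnesses \eas-predictability, and for $p\in\cP$ and $m\ge1$ put $e_p(m)=p\big(\sum_{j\ge m}\ell(p,X_1^j,\Phi(X_1^{j-1}))>0\big)$. I would first observe that the events inside decrease in $m$ to $\{\text{losses occur infinitely often}\}$, which has $p$-measure $0$, so $e_p(m)\downarrow 0$ for each $p$. Then for fixed $\eta>0$ set $\cP_i^\eta=\{p\in\cP:e_p(i)\le\eta\}$; monotonicity of $e_p$ makes this a nesting, and $e_p(i)\to0$ forces $\bigcup_i\cP_i^\eta=\cP$. Finally, the single rule $\Phi$ with warm-up $n=i$ witnesses $\eta$-predictability of $(\cP_i^\eta,\ell)$, since any $p\in\cP_i^\eta$ gives exactly $p(\sum_{j\ge i}\ell(p,X_1^j,\Phi(X_1^{j-1}))>0)=e_p(i)\le\eta$.

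The main thing to get right is the forward block construction: we cannot observe which level $\cP_{i_0}$ contains the unknown $p$ (this is precisely the gap between the unsupervised setting and the equivalence of Theorem~\ref{main1a}), so we must arrange that \emph{every} level is eventually ``in force'' with a summable per-block failure probability, and that each rule's warm-up window has already closed by the time its block begins; both are handled by pairing level $k$ with tolerance $2^{-k}$ and choosing $T_k>m_k$. The converse is comparatively routine, being a continuity-of-measure computation once $\cP$ is stratified by how fast the tail-error probability of the fixed rule $\Phi$ decays.
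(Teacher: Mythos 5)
Your proposal is correct and matches the paper's own route: the forward direction is exactly the paper's Lemma~\ref{nestunion} (run the $2^{-k}$-predictor for $\cP_k$ on a block starting after its warm-up time and apply Borel--Cantelli), and the converse is the same stratification by the tail-error probability of the fixed rule $\Phi$ used in the necessity part of Theorem~\ref{main1a}. No gaps to report.
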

\begin{proof}
The sufficiency follows directly from Lemma~\ref{union2eas}. We now prove the necessary condition. Suppose $\cP$ is $\eas$-predictable, we show that $\cP$ can be
  decomposed into a nesting of $\eta$-predictable collections. By
  Definition~\ref{easp}, there exists a predictor $\Phi$ such that for
  all $p\in\cP$, $\Phi$ makes finitely many errors with probability
  1. For $\eta>0$, we define
  $$\mathcal{P}_{n}^\eta = \{ p\in \mathcal{P} \mid p(\Phi\text{ makes
    errors after time }n)<\eta \},$$ so that for all $n$, $\cP_n^\eta$
  is $\eta$-predictable by definition.  Further, by definition,
  $\forall n\in\mathbb{N}^+$, $\cP_n^\eta\subset \cP_{n+1}^\eta$. To see
  that the union of $\cP^\eta_n$ over all $n$ is $\cP$, for all $p\in\mathcal{P}$, we consider the
  event
  $$
  A_k^p =
  \sets{
    X_1^{\infty}
    \mid
    \sum_{n=k}^{\infty} \ell(p,X_1^n,\Phi(X_1^{n-1}))>0}.
  $$
  We have $p(A_k^p)\rightarrow 0$ as
  $k\rightarrow \infty$ by $\eas$-predictability of $\Phi$. Therefore, there must be some $k$ such that
  $p(A_k^p)<\eta$, and for such a number $k$ we have
  $p\in \mathcal{P}_k^\eta$. Therefore,
  $\mathcal{P}=\bigcup_{n\in \mathbb{N}^+}\cP_n^\eta$. This completes the proof.
\end{proof}

Theorem~\ref{main1b} shows that to prove a collection $(\mathcal{P},\ell)$ is $\eas$-predictable, it is sufficient to find a \emph{universal nesting} of $\mathcal{P}$. To prove that a collection $(\mathcal{P},\ell)$ is not $\eas$-predictable it is sufficient to show that for some $\eta>0$ there is no \emph{$\eta$-nesting} of $\mathcal{P}$. 

We provide an example below, which illustrates how Theorem~\ref{main1b} can be used to derive the result of Cover~\cite{cover1973determining}. See also Section~\ref{exam} for more applications in different contexts.
\begin{example}
\label{eg:cover}
  The task is to predict whether the
parameter $p$ of an \iid Bernoulli$(p)$ process is rational or not
using samples from it.

Therefore our predictor
$$\Phi: \sets{0,1}^*\to \sets{ \text{rational}, \text{ irrational}
}.$$In~\citep{cover1973determining}, Cover showed a scheme that
predicted accurately with only finitely many errors for all rational
sources, and for a set of irrationals with Lebesgue measure 1. Here we
show a more transparent version of Cover's proof as well as subsequent
refinements in~\cite{koplowitz1995cover} using Theorem~\ref{main1b}
above and an argument evocative of regularization.

Define the loss $\ell(p, X_1^n,Y_n)=0$ iff $Y_n$ gives the correct irrationality of $p$. Note that the setting is 
what we would call the ``unsupervised'' case and that there is no way
to judge if our predictions thus far are right or wrong.

Let $r_1,r_2,\cdots$ be an enumeration of rational numbers in
$[0,1]$. 
Let $B(p,\epsilon)$ be the set of numbers in $[0,1]$ whose
 $L_1$ distance from $p$ is $< \epsilon$. For all $k$, let 
\[
\mathcal{S}_k
=
\Paren{
[0,1]
\backslash
\bigcup_{i=1}^{\infty}B(r_i,\frac{1}{k2^i})}
\cup
\{r_1,\cdots,r_k\}
\]
be the set that excludes a ball centered on each rational number, but
throws back in the first $k$ rational numbers. Note that the Lebesgue
measure of $\cS_k$ is $\ge 1-\frac1k$. Now $\mathcal{S}_k$ contains
exactly $k$ rational numbers, the rest being irrational. Moreover,
$S_k$ contains no irrational number within distance $ \le 2^{-k}/k$
from any of the included rationals. Hence, the set $\cB_k$ of
Bernoulli processes with parameters in $\mathcal{S}_k$ is
$\eta$-predictable for all $\eta>0$.

From Theorem~\ref{main1b}, we can conclude that the collection $\cB
\ed\bigcup_{k\in \mathbb{N}}\mathcal{B}_k$ is $\eas$-predictable.
Note that every rational number belongs to $\mathcal{S}=\bigcup_{k\in \mathbb{N}}\mathcal{S}_k$, and the set of irrational numbers in $\mathcal{S}$ 
has Lebesgue measure 1, proving~\citep[Theorem 1]{cover1973determining}.

Conversely, let $\mathcal{S}\subset[0,1]$ and $\mathcal{B}$ be the
Bernoulli variables with parameters in $\mathcal{S}$.  We show that if
$\mathcal{B}$ is $\eas$-predictable for rationality of the underlying
parameter, then $\mathcal{S}=\bigcup_{k\in \mathbb{N}}\mathcal{S}_k$
such that $S_k\subset S_{k+1}$ and
\[
\inf\{|r-x|:  
r,x\in \mathcal{S}_k
\text{, }r\text{ is rational, }x\text{ is irrational}\}>0.
\] 
Since $\mathcal{B}$ is $\eas$-predictable, Theorem~\ref{main1b} yields
that for any $\eta>0$, the collection $\mathcal{B}$ can be decomposed
as $\mathcal{B}=\bigcup_{k}\mathcal{B}_{k}$ where each
$\mathcal{B}_{k}$ is $\eta$-predictable and
$\forall k,~\mathcal{B}_k\subset\mathcal{B}_{k+1}$. Let
$\mathcal{S}_k$ be the set of parameters of the sources in
$\mathcal{B}_k$.  Intuitively,
$\eta$-predictability of $\mathcal{B}_k$ implies that we must have
$$
\inf \sets{|u-v|: u,v\in \mathcal{S}_{k}\text{, } u\text{ rational,
  }v\text{ irrational}} >0,$$
or else we would not be able to universally
attest to rationality with confidence $\ge 1-\eta$ using a bounded number of
samples. See Lemma~\ref{coverl2} for a formal proof.

Suppose we want $\mathcal{S}$ to contain all rational numbers in
$[0,1]$. Then it follows (see Lemma~\ref{coverl3}) that the subset of irrational
numbers of $\mathcal{S}_k$ must be nowhere dense. Therefore, the set
of irrationals in $\mathcal{S}$ is a meager or Baire first category
set~\cite[Chapter 2.1]{rudin2006functional}, proving~\cite[Theorem 2]{koplowitz1995cover}.
\end{example}

Note that the necessary and sufficient conditions in Theorem~\ref{main1b} do not necessarily match in general. Indeed, the following example shows that the necessary (respectively sufficient) condition in Theorem~\ref{main1b} is not sufficient (respectively necessary).

\begin{example}
\label{etanot2eas}
  Let $\mathcal{P}$ be a
  class of binary (taking values 0 or 1) random processes that
  converge to either 0 or 1 in probability. Formally, $\cP$ is the
  collection of all probability measures $p_b$, $b\in\sets{0,1}$,
  defined on the Borel $\sigma$-algebra of $\{0,1\}^\infty$,
  that satisfies
  $$\lim_{n\to\infty} p_b(X_{n}=b)=1.$$
  The task of the prediction $Y_n$ is to predict the parameter $b$
  associated with the process, and takes values in $\{0,1\}$. The loss
  $\ell$ associated with the prediction is defined as
  $$\ell(p_b,X_1^n,Y_n)=1\{Y_n\not=b\}.$$

We now show that the condition deemed necessary in
Theorem~\ref{main1b} holds for $(\mathcal{P},\ell)$. To see this, let
$\mathcal{P}_i^{\eta}$ be the class of processes $p_b\in\mathcal{P}$ such
that for all $n\ge i$ 
\[
  p_b(X_{n}=b ) \ge 1-\eta.
\]
The $\eta$-predictability of $\mathcal{P}_i^{\eta}$ follows because
$p_b(X_i=b) \ge 1-\eta$, and a predictor that predicts $X_i$ for all time
steps $\ge i$ will incur loss $0$ past time step $i$ whenever $X_i=b$.

We show that the collection $(\mathcal{P},\ell)$ is, however,
\emph{not} $\eas$-predictable. To see this, suppose such a prediction
rule $\Phi$ exists. We first observe that there exists a number $N(m)$
such that for all finite binary sequences $x_1,\cdots,x_m$ of length
$m$ and all $b\in\{0,1\}$
\begin{equation}
  \label{eq:dummy}
  \Phi(x_1,\cdots,x_m,b,\cdots,b)=b,
\end{equation}
whenever the number of $b$'s is larger than $N(m)$. This holds because
each of the $2\cdot 2^m$ semi-infinite strings $x_1,\cdots, x_m, b,\ldots$
corresponds to a process in $\cP$ that assigns probability 1 to that
string. If~\eqref{eq:dummy} did not hold, $\Phi$ would make an infinite
number of errors on one of these processes, contradicting the
$\eas$-predictability of $\Phi$ on $(\mathcal{P},\ell)$.

We now construct the following process $p_0$ in $\cP$ that will break
$\Phi$. Let $M_0=0$, $M_1=N(0)+1$, and recursively define
$M_n=N(M_{n-1})+1$. The process $p_0$ is partitioned into independent sample
blocks, where the $n$th block ranges from $X_{M_n+1}$ to $X_{M_{n+1}}$
such that $X_{M_{n}+1}=X_{M_{n}+2}=\cdots=X_{M_{n+1}}$ and
$$p_0(X_{M_{n}+1}=0)=1-\frac{1}{n}.$$
Let $A_n$ be the event that $X_{M_{n}+1}=1$. We have $A_n$ happens infinitely often almost surely by the converse of the Borel-Cantelli lemma, since $\sum_np_0(A_n)=\sum\frac{1}{n}=\infty$ and $A_n$'s are independent. By construction, $\Phi$ makes errors in sample block $n$ if $A_n$ happens, hence $\Phi$ makes infinitely many errors almost surely. But clearly $p_0\in \mathcal{P}$, contradicting the $\eas$-predictability of $\Phi$.

With similar construction, we can also show that a collection $(\mathcal{P},\ell)$ is $\eas$-predictable does not imply the existence of a universal nesting of $\mathcal{P}$. See Example~\ref{easnot2universal} for a detailed construction.
\end{example}

While Theorem~\ref{main1b} may look rather innocuous, it offers a
partial resolution to an open problem in~\cite{dembo1994topological}.
Let $\mathcal{H}_1$ and $\mathcal{H}_2$ be disjoint classes of
distributions over $\mathbb{R}^d$. Let $\mathcal{H}$ be the class of
all $\iid$ random processes with marginal distributions from
$\mathcal{H}_1\cup \mathcal{H}_2$. Dembo and Peres \cite{dembo1994topological}
considered the problem of identifying whether the marginal of a \iid
random process in $\mathcal{H}$ comes from $\mathcal{H}_1$ or
$\mathcal{H}_2$ by observing samples from it. The prediction domain
now is $\mathcal{Y}=\{1,2\}$ and loss is
$\ell^{HT}(p,X_1^n,Y_n)=1\{p\not\in
\mathcal{H}_{Y_n}\}$. Dembo and Peres \cite{dembo1994topological} showed that if the
distributions in $\mathcal{H}_1\cup \mathcal{H}_2$ have densities
\emph{and} there exists some $r>1$ such that the $r$-th norm of the
densities are finite, then $(\mathcal{H},\ellht)$ is $\eas$-predictable iff
the distributions in $\mathcal{H}_1$ and $\mathcal{H}_2$ are
\emph{$F_{\sigma}$-separable} (see Section~\ref{hypotest} for
definition) under any metric consistent with weak convergence
topology. 

Dembo and Peres \cite{dembo1994topological} asked whether the condition $r>1$ can be
removed. We give a positive answer to this problem as follows.
\begin{corollary}
\label{demboopen}
Suppose there exists a monotonically increasing function
$G:\mathbb{R}^+\rightarrow\mathbb{R}^+$ 
with $\lim_{x\rightarrow\infty}G(x)=\infty$ such that for
any distribution $p\in\mathcal{H}_1\cup\mathcal{H}_2$ with density
$f_p(x)$, we have $\mathbb{E}_{X\sim p}[G(f_p(X))]<\infty$. Then
$(\mathcal{H}, \ellht)$ is $\eas$-predictable iff the distributions in
$\mathcal{H}_1$ and $\mathcal{H}_2$ are $F_{\sigma}$-separable under
weak convergence topology.
\end{corollary}
The proof and a discussion of the Corollary above is left to Section~\ref{hypotest}.

\section{Specialized settings}
\label{s:specialized}
While the necessary and sufficient conditions in Theorem~\ref{main1b} do not match up in general as shown above, the characterization of $\eas$-predictability can be tightened in several natural settings. We split our further analysis into two settings---one where the loss can be observed (the supervised setting) and where the loss cannot be observed (unsupervised setting). We provide tight characterizations for the supervised setting, and for $\iid$ generated data when the loss is only a function of the source and the prediction in the unsupervised setting. 

In this section, the distinction between $\eta$-nesting and universal nesting will become clear as well---we will also encounter examples that allow $\eta$-nestings for all $\eta>0$, but which lack a universal nesting even in the supervised setting.

\subsection{Supervised setting}
\label{ch2sec3.3.1}
We now characterize $\eas$-predictability in \emph{supervised} setting, i.e. the loss $\ell:\mathcal{X}^*\times \mathcal{Y}\rightarrow \{0,1\}$ is independent of the underlying source $p$ (or equivalently the loss can be gauged from the samples). We show that the existence of $\eta$-nesting for all $\eta>0$ is necessary and sufficient to achieve $\eas$-predictability in the supervised case. As we have demonstrated in Example~\ref{etanot2eas} the existence of $\eta$-nesting for all $\eta>0$ does not imply $\eas$-predictability in the most general case. Theorem~\ref{main1a} shows that the implication holds when the loss is supervised.

\begin{theorem}
\label{main1a}
Consider a collection $\mathcal{P}$ with a loss
$\ell:\cX^*\times \cY\to \sets{0,1}$ (i.e. the supervised
setting). $(\cP,\ell)$ is $\eas$-predictable iff for all $\eta>0$, there exists
a $\eta$-nesting of $\mathcal{P}$.
\end{theorem}
\begin{proof}  
  The necessity follows directly from Theorem~\ref{main1b}.
  We now prove that the existence of $\eta$-nesting for all $\eta>0$ is also sufficient. Suppose that for all
  $j\in \mathbb{N}$, there exists a nesting
  $\sets{\mathcal{P}_n^j:n\ge1}$ of $\cP$ such that $\mathcal{P}_n^j$
  is $2^{-j}$-predictable for all $n\ge 1$. Furthermore, from Lemma~\ref{decomeq}, we
  can choose a decomposition such that $\mathcal{P}_n^j$ is $2^{-j}$
  predictable with sample size $n$. Therefore, there exist predictors
  $\Phi_{n,j}$ such that for all $p\in \cP_n^j$
  $$
  p(\Phi_{n,j}\text{ makes errors after time } n) \le 2^{-j}.$$

  We construct a predictor $\Phi$ for $\cP$ as follows. At each time
  step $T$, let $I(n,j)$ be the indicator that $\Phi_{n,j}$ makes no
  error on $X_1^{T-1}$ after time $n$. Let
 \begin{equation}
 \label{thm1eqindex}
 (k,i)=\argmin_{(n,j)\in \mathbb{N}\times \mathbb{N}}\{j+n\mid I(n,j)=1\}.
\end{equation}  
  The prediction is defined as $\Phi(X_1^{T-1})=\Phi_{k,i}(X_1^{T-1})$.
  
We claim that the predictor $\Phi$ will make only finitely many errors with probability $1$ for all models in $\mathcal{P}$. Fix some $p\in \cP$. Let $n_j=\min\{n\mid p\in \cP_{n}^j\}$. Define the event 
\[
A_j=\{\Phi_{n_j,j}\text{ makes errors after time step }n_j\}.
\]
We have
$\sum_{j=1}^{\infty}p(A_j)\le\sum_{j=1}^{\infty}2^{-j}<\infty$.

Therefore, the Borel-Cantelli lemma implies that there is a set with
probability 1, such that on every semi-infinite sequence $X_1^\infty$ in
that set, there is a $J$ such that $\Phi_{n_J,J}$ makes no errors after
step $n_J$.
By construction of $\Phi$, for $X_1^\infty$ in the set of
probability 1 above, we will therefore never choose an estimator
$\Phi_{n,j}$ with $n+j>n_J+J$ in step~\eqref{thm1eqindex}. If some
$\Phi_{n,j}$ with $n+j\le n_J+J$ makes infinitely many errors, it will
no longer appear in the feasible set in~\eqref{thm1eqindex} after some
time step $T\ge n$. Since there are only finitely many predictors
$\Phi_{n,j}$ with $n+j\le n_J+J$, the procedure will eventually choose
some predictor that makes finitely errors.
\end{proof}

\begin{remark}
 Note that, the approach in the proof of Theorem~\ref{main1a} can be viewed as \emph{regularization}. The measure of complexity here is $n+j$. Here $j$ equals the negative log of the confidence probability of the $2^{-j}-$nesting $\{{\cal P}_n^j,n\ge 1\}$ and $n$ denotes that we restrict the class to ${\cal P}_n^j$, i.e., it is a measure of the complexity of the class considered. The smaller $n+j$ is, the less the complexity the
  $\mathcal{P}_n^j$ has. The indicator $I(n,j)$ is interpreted as consistency of the sample with class $\mathcal{P}_n^j$. At each step, equation~(\ref{thm1eqindex}) selects a subclass with minimum complexity that is consistent with current sample.
\end{remark}

While $\eas$-predictability is completely characterized by the existence of $\eta$-nesting for all $\eta>0$ in the supervised setting, $\eas$-predictability does not imply the existence of a \emph{universal nesting} as the following example demonstrates. A corollary of this is that the existence of $\eta$-nesting for all $\eta$ is not equivalent to the existence of a universal nesting even in the supervised setting---the two are different concepts.

\begin{example}
\label{easnot2universalsuper}

Let $\mathcal{P}$ be the class of the following processes. For any infinite sequence $M_1<M_2<\cdots <M_n<\cdots\in \mathbb{N}^{\infty}$, we define process $p^M$ over $\{0,1\}^{\infty}$ as follows:
$$X_{M_n}\sim \text{Bernoulli}\left(\frac{1}{2}\right)~\text{independently for all }n\in \mathbb{N},$$
and $X_t=0$ for all other time step $t$. The loss $\ell$ is defined as follows:
$$\ell(p^M,X_1^t,Y_t)=0\text{ iff there exists at least one }1 \text{ in }X_1^t.$$
Note that the loss only depends on the sample but not the underlying source and predictions. Clearly, the loss is observable from the sample, thus the setup is supervised. Furthermore, $(\mathcal{P},\ell)$ is $\eas$-predictable since for any source $p^M$, $p^M(\exists n\ge 1,~X_{M_n}=1)=1$. 

Let $\mathcal{P}_1\subset\mathcal{P}_2\subset\cdots\subset\mathcal{P}$ be an arbitrary nesting of $\mathcal{P}$. We show that $\{\mathcal{P}_k,~k\ge 1\}$ cannot be a universal nesting of $\mathcal{P}$ by contradiction.

Suppose the above were a universal nesting, so each ${\cal P}_k$ in the nesting is $\eta$-predictable for all $\eta>0$. For all $k$, let $B_k$ be the sample size for $\mathcal{P}_k$ to achieve $\frac{1}{2^{k+1}}$-predictability. Now let $p^B\in \mathcal{P}$ be the source that is associated with sequence $\{B_k+1\}_{k\in \mathbb{N}^+}$. 

We claim that $p^B\not\in \mathcal{P}_k$ for all $k\in \mathbb{N}^{+}$. To see this, note that $p^B(\forall i\le k,~X_{B_i+1}=0)=\frac{1}{2^k}$ which in turn implies that with probability at least $\frac{1}{2^k}$, we make an error at step $B_k+1$. If for any $k$, $p^B$ were in ${\cal P}_k$, $\mathcal{P}_k$ would not be $\frac{1}{2^{k+1}}$-predictable. Therefore  $p^B\not\in \bigcup _{k}\mathcal{P}_k$. However, we also know that $p^B\in\mathcal{P}$, meaning that $\{\mathcal{P}_k,k\ge 1\}$ cannot be a nesting of $\mathcal{P}$.
\end{example}

\subsection{Unsupervised setting}
While $\eta$-nestings (for all $\eta$) characterizes the supervised setting, it does not carry over for the unsupervised case in the most general cases. In this subsection, we introduce several special cases for the unsupervised setting where tight characterizations are available.

We first prove the following theorem, which shows that if $\mathcal{P}$ is \emph{countable} then existence of universal nesting $\Leftrightarrow$ $\eas$-predictability $\Leftrightarrow$ existence of $\eta$-nesting for all $\eta>0$.

\begin{theorem}
Let $\mathcal{P}$ be a \emph{countable} class and $\ell$ be an arbitrary loss. Then there exists a universal nesting of $\mathcal{P}$ $\Leftrightarrow$ $(\mathcal{P},\ell)$ is $\eas$-predictable $\Leftrightarrow$ there exist $\eta$-nesting of $\mathcal{P}$ for all $\eta>0$.
\end{theorem}
\begin{proof}
Let $p_1,p_2,\cdots$ be an arbitrary enumeration of $\mathcal{P}$.

We first show that if $(\mathcal{P},\ell)$ is $\eas$-predictable then there is a universal nesting for $\mathcal{P}$. Define $\mathcal{P}_k=\{p_1,\cdots,p_k\}$. We show that $(\mathcal{P}_k,\ell)$ is $\eta$-predictable for all $\eta>0$ and $k\ge 1$. To see this, let $\Phi$ be the $\eas$-prediction rule for $(\mathcal{P},\ell)$. For any given $\eta>0$ and $i\ge 1$, there exists a number $N_{i,\eta}$ such that $\Phi$ makes errors after step $N_{i,\eta}$ w.p. $\le \eta$ when $p_i$ is in force. Now, for any $\mathcal{P}_k$, we know that $\Phi$ makes no errors after step $\max_{1\le i\le k}\{N_{i,\eta}\}$ w.p. $\ge 1-\eta$ no matter what source $\in\mathcal{P}_k$ is in force. Therefore $(\mathcal{P}_k,\ell)$ is $\eta$-predictable for all $\eta>0$ and $k\ge 1$.

We now prove that if there exists $\eta$-nesting of $\mathcal{P}$ for all $\eta>0$ then $(\mathcal{P},\ell)$ is $\eas$-predictable. To see this, let $\{\mathcal{P}_k^j,k\ge 1\}$ be the $2^{-j}$-nesting of $\mathcal{P}$. By Lemma~\ref{decomeq}, we can assume that there exists prediction rule $\Phi_k^j$ such that $\Phi_k^j$ makes no errors after step $k$ w.p. $\ge 1-2^{-j}$ for all $p\in \mathcal{P}_k^j$. We can assume, w.l.o.g., that $\mathcal{P}_k^j$ contains only the sources in $\{p_1,\cdots,p_k\}$. For all $j\ge 1$, let $K_j$ be the minimum index of class in $\{\mathcal{P}_k^j\}$ such that $\mathcal{P}_{K_j}^j$ contains all sources in $\{p_1,\cdots,p_j\}$. We now construct the $\eas$-prediction rule as follows. We partition the prediction into phases, initially at phase $0$. We are in phase $j\ge 1$ if the time step $n$ satisfies $K_j\le n<K_{j+1}$. At phase $j\ge 1$ we use $\Phi_{K_j}^j$ to make the prediction and predict arbitrarily at phase $0$.

Suppose the underlying source is $p_t$, we know that $\mathcal{P}_{K_j}^j$ contains $p_t$ for all $j\ge t$. Meaning that, the probability we make errors at phase $j\ge t$ is at most $2^{-j}$. The theorem follows by the Borel-Cantelli lemma.
\end{proof}

We now introduce another unsupervised setting where we obtain a full characterization of $\eas$-predictability---the \emph{pure estimation} setting with \emph{finite} prediction domain and $\iid$ sampling, where the loss is a function of only the source and prediction. This includes, e.g., the almost sure hypothesis testing framework of Dembo and Peres~\cite{dembo1994topological}.

\begin{theorem}
\label{pureestimation}
  Let $\mathcal{P}$ be a model collection of $\iid$ measures over
  $\mathcal{X}^{\infty}$, $\ell$ is a loss function that only depends
  on the prediction and underlying source but not samples,
  i.e. $\ell:\mathcal{P}\times \mathcal{Y}\rightarrow \{0,1\}$. If
  $|\mathcal{Y}|$ is finite, then $(\mathcal{P},\ell)$ is
  $\eas$-predictable iff there exists a universal nesting
  $\{\mathcal{P}_i,~i\ge 1\}$ of $\mathcal{P}$ such that
  for all $\eta>0$, $(\mathcal{P}_i,\ell)$ is $\eta$-predictable.
\end{theorem}
\begin{proof}
  Applying Theorem~\ref{main1b} to $(\cP,\ell)$, we know that if the
  universal nesting exists, then $(\cP,\ell)$ is $\eas$-predictable.
  Theorem~\ref{main1b} also guarantees that if $(\cP,\ell)$ is
  $\eas$-predictable, then for all $\eta>0$, there is a nesting
  $\sets{\cP^{\eta}_i}$ where $(\cP^\eta_i,\ell)$ is $\eta$-predictable.
  
We prove the theorem by showing that if there exists some $\eta>0$
for which there is a nesting $\sets{\cP_i}$ where
$(\mathcal{P}_i,\ell)$ is $\eta$-predictable, then this nesting is
also universal for all $\eta>0$, \emph{i.e.} $(\mathcal{P}_i,\ell)$ is also
$\eta$-predictable for all $\eta>0$.

To do so, we show that if $(\mathcal{P}_i,\ell)$ is
$(\frac{1}{|\mathcal{Y}|}-\epsilon)$-predictable for any $\epsilon>0$
then it is $\eta$-predictable for all $\eta>0$. Suppose $N$ is the sample
size such that a predictor $\Phi$ makes no errors past $N$ with probability
$\ge 1- \frac1{|\mathcal{Y}|}+\epsilon$.
For any $\eta< \frac1{|\mathcal{Y}|}-\epsilon$, 
let $M=\frac{2\log(\eta/2)}{\epsilon^2}$ and
consider a sample of size $MN$. We split this sample into $M$ blocks
of size $N$ each, and apply the predictor $\Phi$ to each block,
obtaining a prediction $Y_i\in\mathcal{Y}$ for the $i'$th block. Our
prediction for the sample of size $MN$ is an element of $\cY$ that
repeats most often in $Y_1,\cdots, Y_M$.

By Hoeffding bound with probability $\ge 1-\eta$, any element of $\cY$
that incurs loss 1 against the underlying distribution will appear at
most $\frac{1}{|\mathcal{Y}|}-\epsilon/2$ frequency among
$Y_1,\ldots, Y_M$. But at least one element of $\cY$ appears more than
$1/|\mathcal{Y}|$ frequency among $Y_1,\ldots, Y_M$, and any such an element must
incur 0 loss. The theorem follows.
\end{proof}

\begin{remark}
Note that the finiteness of the prediction domain $\mathcal{Y}$ is essential for the proof of the above Theorem to work. We leave it as an open problem to determine if the theorem above can be extended to countable domain $\mathcal{Y}$ and general loss $\ell:\mathcal{P}\times \mathcal{Y}\rightarrow\{0,1\}$.
\end{remark}

\section{Capturing the final error}
\label{ch2sec4}
While $\eas$-predictability is an attractive setup when considering
rich model classes, we would like to see if a predictor that makes
finitely many errors has finished making the errors.  Namely, can we
obtain a stopping rule that identifies the last error? Recall that a stopping
rule is a function $\tau : \mathcal{X}^*\rightarrow \{0,1\}$, such
that $\tau(\text{y})\le \tau(\text{x})$ if $\text{y}$ is a prefix of
$\text{x}$. We interpreted $\tau=0$ as the waiting period, and
$\tau=1$ as the rule has stopped waiting.

\begin{definition}[$\eas$-learnable]
\label{easl}
A collection $(\mathcal{P},\ell)$ is said to be eventually almost surely (e.a.s.)-learnable, if for any $\eta>0$, there exists a universal prediction rule $\Phi_{\eta}$ together with a stopping rule $\tau_{\eta}$, such that for all $p\in \cP$
$$
p
\left(
  \sum_{n=1}^{\infty}
  \ell( p, X_1^n, \Phi_{\eta}(X_1^{n-1}))
  \tau(X_1^{n-1})
  >0
\right)<\eta,$$
and
$$p
\Paren{
  \lim_{n\to\infty}\tau_{\eta}(X_1^n)=1}
=1.$$
\end{definition}

Clearly, $\eas$-learnability implies $\eas$-predictability. 

\begin{theorem}
\label{predict2learn}
Any $\eas$-learnable $(\mathcal{P},\ell)$ is $\eas$-predictable.
\end{theorem}
\begin{proof}
Suppose $(\mathcal{P},\ell)$ is e.a.s.-learnable. Then for each $i$,
we let $\Phi_i$ and $\tau_i$ be the predictor and stopping rule pair
respectively that $\eas$-learns $(\mathcal{P},\ell)$ with $\eta=1/2^i$.  By definition, we
have that the probability $\Phi_i$ makes an error after $\tau_i$ stops
(i.e. $\tau_i=1$) is $\le \frac1{2^i}$. Let $\Phi_0$ be an arbitrary
predictor.

Now, there are countably many stopping rules (one for each natural
number $i\ge0$) and each such rule stops at a finite time with
probability $1$, we conclude that with probability $1$ all of them
would have stopped simultaneously at some finite time by a union
bound.

We initialize $t=1$ ($t$ will stand for the stage). As we see more of
the sample, at any stage $t$, we predict using the prediction rule
$\Phi_{t-1}$, till $\tau_t$ halts (i.e. $\tau_t=1$). At that point, we
move to stage $t+1$. For $t\ge2$, the probability of making an error
in stage $t$ is $\le 2^{-t}$.  Invoking the Borel-Cantelli lemma, we
conclude that we make errors in finitely many stages almost surely,
and the Theorem follows.
\end{proof}

However, $\eas$-predictability does not imply $\eas$-learnability.

\begin{example}
\label{predictnot2learn}
Let $\mathcal{P}$ be the class of all $\iid$ Bernoulli random processes with parameters in $[0,1]$. For any $p\in \mathcal{P}$, we would like to determine if the parameter of the process equals 1/2 or not (prediction is 1 if parameter is 1/2, 0 else), namely, the loss is $\ell(p,X_1^n,Y_n)=1\{Y_n=1\{p=\frac{1}{2}\}\}$, where in a slight abuse of notation, we use $p$ to denote both the iid Bernoulli source and its parameter. In Example~\ref{introexample2}, we have shown that $(\mathcal{P},\ell)$ is $\eas$-predictable. We now show that $(\mathcal{P},\ell)$ is not $\eas$-learnable. 

Suppose otherwise, let $\Phi$, $\tau$ be the prediction rule and stopping rule that $\eas$-learns $(\mathcal{P},\ell)$ with $\eta=\frac{1}{4}$. Consider the Bernoulli 1/2 source. Since $\tau$ stops finitely almost surely on all sources, there exists a number $N$ such that $\tau$ has stopped before step $N$ w.p. $\ge \frac{3}{4}$ when the Bernoulli 1/2 source is in force. 

Let $A$ be the event that $\tau$ has stopped before $N$ and $\Phi$ takes value $1$ at step $N+1$. Observe that $p(A)\ge \frac{1}{2}$ by union bound. Let $p'$ be any source other than the Bernoulli 1/2 source such that $||p'-p||_{TV}\le \frac{1}{8N}$. We have $p'(A)\ge \frac{1}{2}-\frac{1}{8}>\frac{1}{4}$ by Lemma~\ref{nfoldtv}. But $\Phi$ incurs an error at step $N+1$ on any sequence in $A$ whenever $p'$ is in force, which contradicts that $\Phi$ incurs errors after the stopping rule has halted on a set of probability $<1/4$.
\end{example}

\subsection{Unions of learnable classes}
Even finite unions of $\eas$-learnable classes need not be $\eas$-learnable, as the stopping rule for one class need not even stop with probability 1 on sources of the other. It is more interesting to consider nested unions of classes. Clearly finite nested unions of $\eas$-learnable classes are trivially $\eas$-learnable, but countable unions of nested $\eas$-learnable classes may not necessarily be $\eas$-learnable. To see this, consider the subclasses $\mathcal{P}_k$ we constructed in Example~\ref{introexample2}. We know that each of the $\mathcal{P}_k$ is $\eas$-learnable but $\bigcup_{k\ge 1}\mathcal{P}_k$ is not $\eas$-learnable per Example~\ref{predictnot2learn} above.

The following theorem however shows that a countable union of nested $\eas$-learnable classes is always $\eas$-predictable. 
\begin{theorem}
\label{lean2predict}
Let $\mathcal{P}$ be a class of distributions, and $\{\mathcal{P}_i,~i\ge 1\}$ be a nesting of $\mathcal{P}$. If for all $i\ge 1$, $(\mathcal{P}_i,\ell)$ is $\eas$-learnable, then $(\mathcal{P},\ell)$ is $\eas$-predictable.
\end{theorem}
\begin{proof}
The proof is similar to the proof of Lemma~\ref{union2eas}. For any $k,i$, let $\Phi_i^k$ and $\tau_i^k$ be the prediction rule and stopping rule respectively that achieves $\eas$-learnability for $(\mathcal{P}_i,\ell)$ with parameter $\eta=\frac{1}{k^22^i}$. We now construct the $\eas$-prediction rule for $(\mathcal{P},\ell)$ as follows. We partition the prediction into phases, at the beginning, we are at phase $0$ and the prediction is arbitrary. We will go from phase $k-1$ to phase $k$ if at least one of $\tau_i^k$ with $i\ge k$ has stopped. Denote $I_k$ to be the index of the stopping rule in $\{\tau_i^k\}_{i\ge k}$ that stops earliest during phase $k-1$. We will use the prediction rule $\Phi_{I_k}^k$ to make the prediction during phase $k$. Suppose the underlying distribution is $p\in \mathcal{P}_j$ for some $j$. We know that for each phase $k\ge 1$ w.p. $1$ there must be some stopping rule in $\{\tau_i^k\}_{i\ge 1}$ with $i\ge j$ that stops finitely almost surely. Using a union bound, we therefore conclude that all the phases will be finite simultaneously almost surely. Now, by definition of $\eas$-learnability, the probability of making errors at phase $k\ge j$ is upper bounded by $\frac{1}{k^2}$ using a union bound. By the Borel-Cantelli lemma, we will make errors in finitely many phases almost surely, proving that $\mathcal{P}$ is $\eas$-predictable.
\end{proof}
\begin{remark}
Note that the nesting property is required for Theorem~\ref{lean2predict} to hold, that is, it is possible that countable unions of non-nested $\eas$-learnable classes are not $\eas$-predictable. To see this, let $r_1,r_2,\cdots$ be an arbitrary enumeration of rationals in $[0,1]$. We denote $\mathcal{B}_{i,j}=\{x\in [0,1]:x\text{ is irrational and }|x-r_i|\ge \frac{1}{j}\}$, and $\mathcal{P}_{i,j}$ be the class of $\iid$ Bernoulli processes with parameters in $\mathcal{B}_{i,j}$. Denote $\mathcal{P}=\bigcup_{i,j\ge 1}\mathcal{P}_{i,j}$. Let $\ell$ be the rationality testing loss of Cover~\cite{cover1973determining} as we introduced in Example~\ref{eg:cover}. It is easy to see that $(\mathcal{P}_{i,j},\ell)$ is $\eas$-learnable (in fact it is $\eta$-predictable for all $\eta>0$). Moreover, the stopping rule for each $\mathcal{P}_{i,j}$ stops finitely almost surely on all sources in $\mathcal{P}$. However, $(\mathcal{P},\ell)$ is not $\eas$-predictable. This follows from Example~\ref{eg:cover} and the fact that the set of irrational numbers in $[0,1]$ is not of Baire first category~\cite[Chapter 2.1]{rudin2006functional}.
\end{remark}

\subsection{Characterization of $\eas$-learnability}
The characterization of $\eas$-learnability is captured by the notion of \emph{identifiability} as follows.
\begin{definition}
\label{identifiid}
Let $\mathcal{U}$ be a collection of probability measures over $\mathcal{X}^\infty$, $\mathcal{V}\subset\mathcal{U}$. The class $\mathcal{V}$ is said to be identifiable in $\mathcal{U}$ if for any $\eta>0$ there exists a stopping rule $\tau_{\eta}$, such that
\begin{itemize}
\item[1.]$\displaystyle p\left(\lim_{n\rightarrow\infty}\tau_{\eta}(X_1^n)=1\right)=1$ for $p\in \mathcal{V}$;
\item[2.]$\displaystyle p\left(\lim_{n\rightarrow\infty}\tau_{\eta}(X_1^n)=1\right)\le \eta$ for $p\in \mathcal{U}\backslash\mathcal{V}$.
\end{itemize}
\end{definition}

In other words, the rule almost surely stops on sources in $\mathcal{V}$, but does not stop on sources in $\mathcal{U}\backslash \mathcal{V}$ with the prescribed confidence. If we want to distinguish whether a source is in $\mathcal{V}$ (the null hypothesis) or in $\mathcal{U}\backslash\mathcal{V}$ (alternate), we can do so with asymptotically zero type-I error and arbitrarily small type-II error, hence the term identifiability. 

\begin{example}
\label{identexample}
Let $\mathcal{U}$ be the collection of all $\iid$ processes with
marginal distributions over $[0,1]$, and let $\mathcal{V}\subset
\mathcal{U}$ be the set of distributions whose marginal mean is \emph{not}
equal to $t$ for some fixed $t\in [0,1]$.

We show $\mathcal{V}$ is identifiable in $\mathcal{U}$. To see this,
let $\epsilon_n=\frac{1}{n}$. Consider the following stopping
rule. At stage $n$, we obtain a sample of size $\frac{2\log
  (2^{n+1}/\eta)}{\epsilon_n^2}$ and check whether the empirical
mean is within $\epsilon_n$ distance of $t$. If not, we stop, else
we continue to stage $n+1$.

We show that this stopping rule identifies $\mathcal{V}$ in
$\mathcal{U}$ using Definition~\ref{identifiid}. Suppose the
underlying process has marginal mean equal to $t$. By Hoeffding bound,
with probability at most $\eta/2^n$, the empirical mean will be
outside distance $\epsilon_n$ to $t$. Therefore, the stopping rule
stops with probability at most $\eta$ by a union bound. If the
marginal mean does not equal $t$, since $\epsilon_n\rightarrow 0$, the
probability that the empirical mean will be within distance
$\epsilon_n$ to $t$ is at most $\frac{\eta}{2^{n}}$. By
Borel-Cantelli lemma, this happens only finitely many times
since $\sum \frac{\eta}{2^n}<\infty$, and the stopping rule
stops almost surely.
\end{example}

The following theorem shows that for $\iid$ processes over $\mathbb{N}$, identifiability is equivalent to relative openness under total variation distance. See Appendix~\ref{sec:c1} for details on the proof.
\begin{theorem}
\label{open2idtf}
Let $\mathcal{U}$ be a collection of $i.i.d.$ processes over
$\mathbb{N}^{\infty}$, $\mathcal{V} \subset\mathcal{U}$. Then $\mathcal{V}$ is
identifiable in $\mathcal{U}$ iff the marginals of $\mathcal{V}$
are relatively open in the marginals of $\mathcal{U}$ under total variation distance.
\end{theorem}

\begin{remark}
\label{remk4}
Note that Theorem~\ref{open2idtf} cannot be generalized to the $\iid$ processes over $\mathbb{R}^{\infty}$. To see this, let $\mathcal{A}$ be the collection of all uniform distributions supported on a \emph{finite} set of $[0,1]$, and $\mathcal{V}$ contains the single uniform distribution $p$ over $[0,1]$. Define $\mathcal{U}=\mathcal{A}\cup \mathcal{V}$. Clearly, $\mathcal{V}$ is relatively open in $\mathcal{U}$ under total variation distance. We show that $\mathcal{V}$ is not identifiable in $\mathcal{U}$. Suppose otherwise, let $\tau_{\eta}$ be a stopping rule as in Definition~\ref{identifiid}. Let $N$ be a number such that $\tau_{\eta}$ stops on $p$ before step $N$ w.p. $\ge 1-\eta/4$. Consider a mixture $\tilde{q}$ of $\mathcal{A}$ as follows, we fist sample a set $S\subset [0,1]$ of size $CN^2$ from $p$, then we sample from $S$ uniformly, where $C$ is a constant. By birthday paradox, for sufficient large $C$, w.p. $\ge 1-\eta/4$, there will be no repeats on a sample of size $N$ from $\tilde{q}$. Conditioning on the event that there is no repeats, the statistic of a sample of size $N$ from $\tilde{q}$ will be the same as the sample of size $N$ from $p$. By union bound, there must be some distribution in $q\in\mathcal{A}$ such that $\tau_{\eta}$ stops on $q$ before step $N$ w.p. $\ge 1-\frac{\eta}{2}>\eta$ for all $\eta<1/2$. Which contradicts to Definition~\ref{identifiid}.
\end{remark}

We now provide the following characterization of $\eas$-learnability. We leave the details of the proof to Appendix~\ref{sec:c1}.

\begin{theorem}
\label{main2b}
Let $\mathcal{P}$ be a collection of probability measures. Then $(\cP,\ell)$
is $\eas$-learnable if for all $\eta>0$ there exists an $\eta$-nesting  $\{\mathcal{P}_i,i\ge 1\}$, where in addition for
all $i$, $\mathcal{P}_i^{\eta}$ is identifiable in $\mathcal{P}$.
Moreover, the condition is necessary if the measures in $\mathcal{P}$ are $\iid$ over $\mathcal{X}^{\infty}$.
\end{theorem}

The tight characterization for $\iid$ sources in Theorem~\ref{main2b} can not be extended to arbitrary
sources. We provide an example below that shows that the characterization in Theorem~\ref{main2b} for $\iid$ sources will not extend to Markov processes with even two states. 
\begin{example}
\label{counterthm4}
We consider the Markov processes with state space $\{0,1\}$. Let
$\mathcal{P}$ be the class that contains the single state $1$ process
$p_0$, and processes $p_{\epsilon}$ with transition probability
$p_{\epsilon}(1|0)=p_{\epsilon}(0|1)=\epsilon$ for all $\epsilon \in
(0,1)$. We assume the initial state of $p_{\epsilon}$ to be uniformly
sampled from $\{0,1\}$.

We define the loss $\ell(p,X_1^n,Y_n)=0$ if there exists $k\le n$ such that $X_k=1$. Else the loss is $1$. Note that the
loss only depends on the samples $X_1^n$ but is independent of the
prediction $Y_n$. Thus the prediction does not affect the loss.

We now observe that the class is $\eas$-learnable. One simply stops if
the initial state is $1$, else we wait until we see $1$.

We now show that the decomposition of Theorem~\ref{main2b} does not
exist for $(\mathcal{P},\ell)$. Suppose otherwise, we have a
decomposition $\{\mathcal{P}_n\}_{n\ge 1}$ such that each $(\cP_n,\ell)$
is $1/4$-predictable. We know that for all $n\ge 1$ there exists a
number $\epsilon_n$ such that for all $p_{\epsilon}\in \mathcal{P}_n$
we have $\epsilon\ge \epsilon_n$. Otherwise, we will not see state $1$
in the sample before a bounded time step if the initial state is $0$
(which happens w.p. $1/2$), thus violating the $1/4$-predictability of
$\mathcal{P}_n$. We now assume $p_0\in\mathcal{P}_k$ for some $k$. We
show that $\mathcal{P}_k$ is not identifiable in $\mathcal{P}$. Taking
the parameter $\eta=1/4$ in Definition~\ref{identifiid}, we know that
any $\tau$ must stop on the all $1$ sequence at some point $N_0$,
since $p_0\in \mathcal{P}_k$. Now, taking any process $p_{\epsilon}$
that is not in $\cP_k$ with $\epsilon<\epsilon_k$ and small enough so
that $(1-\epsilon)^{N_0}\ge 3/4$, we have $\tau$ stops on
$p_{\epsilon}$ with probability at least $3/8>1/4$, contradicting
identifiability.
\end{example}

Note that the reason why construction of Example~\ref{counterthm4} is
possible is because the number of states of the process is not known
\emph{a-priori} (either 1 or 2). Indeed, with arguments similar to 
the necessity part of Theorem~\ref{main2b} in the $\iid$ case, we can show that if
$\mathcal{P}$ is a collection of irreducible finite state Markov processes
with the same number of states then the necessary condition in
Theorem~\ref{main2b} still holds.  We should also emphasize that the
stopping rule derived from the sufficient condition of
Theorem~\ref{main2b} is not meant to be optimal. For specific
problems, there will often be more natural stopping rules.

\section{Variations}
\label{ch2sec5}
In addition to $\eas$-predictability and $\eas$-learnability that we established in the previous sections, we define the natural extensions of weakly $\eas$-predictability and prediction with finite expected loss in this section, and study their relationships with $\eas$-predictability.

\subsection{Weakly $\eas$-predictable}
\begin{definition}
A collection $(\mathcal{P},\ell)$ is said to be \emph{weakly} $\eas$-predictable if for any $\eta>0$ there exists a prediction rule $\Phi^{\eta}$, such that for all $p\in \mathcal{P}$
$$p\left(\sum_{n=1}^{\infty}\ell(p,X_1^n,\Phi^{\eta}(X_1^{n-1}))<\infty\right)\ge 1-\eta.$$
\end{definition}

It is easy to see that the characterization in Theorem~\ref{main1b} holds for weakly $\eas$-predictability as well.

\begin{theorem}
If a collection $(\mathcal{P},\ell)$ is weakly $\eas$-predictable then for all $\eta>0$ there exists a $\eta$-nesting of $\mathcal{P}$.

Conversely, if there exists a universal nesting of $\mathcal{P}$, then $(\mathcal{P},\ell)$ is weakly $\eas$-predictable.
\end{theorem}
\begin{proof}
The sufficiency follows immediately from Theorem~\ref{main1b}, since $\eas$-predictability implies weakly $\eas$-predictability. To prove the necessary condition, one could replicate the same argument as in the proof of Theorem~\ref{main1b}.
\end{proof}

The following example shows that weakly $\eas$-predictable does not imply $\eas$-predictable.

\begin{example}
For any $m\in\mathbb{N}$ we define a set $S_m\subset \mathbb{N}$ such that $|S_m|=m$, and we require $S_m\cap S_{m'}=\emptyset$ for all $m\not=m'\in \mathbb{N}$. Clearly, such a family of sets exists. Now, for any sequences $\textbf{a}=(a_1,a_2,\cdots)\in \mathbb{N}^{\infty}$ and $\textbf{b}=(b_1,b_2,\cdots)\in \mathbb{N}^{\infty}$ with $b_m\in S_m$ for all $m\in\mathbb{N}$, we define a source $p^{(\textbf{a},\textbf{b})}$ as follows.
\begin{itemize}
    \item[1.] The process $p^{(\textbf{a},\textbf{b})}$ has \emph{no} outcome, i.e. the learner can't observe any sample;
    \item[2.] The loss $\ell(p^{(\textbf{a},\textbf{b})},X_1^n,Y_n)=0$ if and only if there exist some $m\in \mathbb{N}$ such that $n\ge a_m$, $Y_n\in S_m$ and $Y_n\not=b_m$.
\end{itemize}

Let $\mathcal{P}$ be the class of all such sources. We now show that there exist randomized prediction rules that achieve weakly $\eas$-predictability for $(\mathcal{P},\ell)$. For any $\eta>0$, the predictor $\Phi^{\eta}$ is defined as follows. Let $M$ be a number such that $M\ge \frac{1}{\eta}$. We select a number $c\in S_M$ uniform randomly at the beginning of the game, and define $\Phi^{\eta}(X_1^{n-1})=c$ for all $n\ge 1$. Clearly, there exists some time step $N$ such that $N\ge a_M$, and the probability that we make errors after step $N$ is at most $\frac{1}{M}\le \eta$.

We now show that any randomized predictor $\Phi$ can't achieve $\eas$-predictability for $(\mathcal{P},\ell)$. Let $\Omega$ be the probability space of the internal randomness of $\Phi$, i.e., for any $\omega\in \Omega$, $\Phi^{\omega}$ is a deterministic strategy. Let $\mu$ be the corresponding probability measure over $\Omega$.

Let $c_1,c_2,\cdots$ be a sequence of numbers such that the probability of $\Phi$ making predictions in $\bigcup_{i\le m}S_i$ after step $c_m$ is upper bounded by $\frac{\eta}{2^m}$ for some $\eta<1$. We show that such sequences exist. Otherwise, there exists some $m$ such that $\Phi$ makes predictions on some number $s\in S_m$ infinite often with positive probability. However, this will violate the $\eas$-predictability of $\Phi$, since it will make infinite errors for the source that takes $b_m=s$.

Let $A$ be the event that, $\Phi$ makes no prediction on $\bigcup_{i\le m}S_i$ after step $c_m$ for all $m\in \mathbb{N}$. By union bound, we have $\mu(A)\ge 1-\eta$. We now take $a_m = c_m$ for all $m\in \mathbb{N}$. And let $b_m$ to be choosing uniform randomly from $S_m$ and independently for different $m$. Denote $\Gamma$ to be the probability space of the selection processes, and $\nu$ be the corresponding probability measure. For any $\gamma\in \Gamma$ we denote $p^{\gamma}$ to be the source that is selected from the process.

For any $(\omega,\gamma)\in \Omega\times \Gamma$, we define $1\{\omega,\gamma\}$ to be the indicator that $\Phi^{\omega}$ makes infinite errors on $p^{\gamma}$. We claim that for any $\omega\in A$ we have
$$\mathbb{E}_{\gamma\sim \nu}1\{\omega,\gamma\}=1.$$
W.o.l.g., we can assume that $\Phi^{\omega}$ makes predictions only on $S_{m+1}$ during phase $m$ of steps from $a_m+1$ to $a_{m+1}$. Now, we have the probability of making errors at phase $m$ is at least $\frac{1}{m}$. Since the $b_m$ is selected independently, we know that the errors at different phases are independent. The claim follows by the converse Borel-Cantelli lemma.

Now, we have shown that
$$\mathbb{E}_{\omega\sim \mu}[\mathbb{E}_{\gamma\sim\nu}1\{\omega,\gamma\}]\ge 1-\eta.$$
By switching order of expectation, we have
$$\mathbb{E}_{\gamma\sim \nu}\mathbb{E}_{\omega\sim\mu}1\{\omega,\gamma\}\ge 1-\eta.$$
This implies that there exists some $\gamma$ such that the probability of $\Phi$ making infinite errors on $p^{\gamma}$ is at least $1-\eta>0$. This violates the $\eas$-predictability of $\Phi$.
\end{example}

\begin{remark}
The above example shows that if the prediction rule is randomized, then weakly $\eas$-predictability does not imply $\eas$-predictability. However, it is still open whether such a separation exists if the prediction rule is required to be deterministic.
\end{remark}

\subsection{Prediction with finite expected loss}
\begin{definition}
A collection $(\mathcal{P},\ell)$ is said to be predictable with finite expected loss if there exists a prediction rule $\Phi$, such that for all $p\in \mathcal{P}$
$$\mathbb{E}_p\left[\sum_{n=1}^{\infty}\ell(p,X_1^n,\Phi(X_1^{n-1}))\right]<\infty.$$
\end{definition}

By the Borel-Cantelli lemma, it is clear that if a collection is predictable with finite expected loss, then it is also $\eas$-predictable. The following example shows that the converse does not hold in general.

\begin{example}
Let $\mathcal{P}$ be the class with a single source $p$ defined as follows. Let $U$ be the uniform distribution over $[0,1]$, we define
$$\forall n\in \mathbb{N},~X_n=U,$$
i.e. all the $X_n$ have the same value that is sampled from $U$. The loss is defined as follows
$$\ell(p,X_1^{n},Y_n)=1\text{ if and only if }X_n\le \frac{1}{n}.$$
Clearly, $(\mathcal{P},\ell)$ is $\eas$-predictable, since with probability $1$ we have $U\not=0$, meaning that we will make no error after step $\lceil\frac{1}{U}\rceil$. However, the collection is not predictable with finite expected loss. To see this, we define $A_n$ to be the event that an error occurred at step $n$. We have
$$\mathbb{E}_p\left[\sum_{n=1}^{\infty}\ell(p,X_1^n,\Phi(X_1^{n-1}))\right]=\sum_{n=1}^{\infty}\mathbb{E}_p[A_n]=\sum_{n=1}^{\infty}\frac{1}{n}=\infty.$$
\end{example}

Clearly, if a collection $(\mathcal{P},\ell)$ is predictable with finite expected loss, then there exists a nesting $\{\mathcal{P}_i,i\ge 1\}$ of $\mathcal{P}$ such that $\forall i\ge 1$ $(\mathcal{P}_i,\ell)$ is predictable with \emph{bounded} expected loss. To do so, we can simply define $$\mathcal{P}_i=\{p\in \mathcal{P}:p\text{ has expected loss}\le i\text{ under prediction rule }\Phi\},$$
where $\Phi$ is a prediction rule that achieves finite expected loss on $(\mathcal{P},\ell)$. However, we are unaware if the converse is true or not. We have the following open problem.

\begin{problem}[Open Problem]
\label{openp1}
Suppose that there is a nesting $\{\mathcal{P}_i,i\ge 1\}$ for a class $\mathcal{P}$, such that $\forall i\ge 1$ $(\mathcal{P}_i,\ell)$ is predictable with bounded expected loss for the same loss function $\ell$. Is the collection $(\mathcal{P},\ell)$ predictable with finite expected loss?
\end{problem}
\begin{remark}
We say a collection $(\mathcal{P},\ell)$ is predictable with bounded expected \emph{tail} loss if there exists a prediction rule $\Phi$, function $\rho:\mathbb{N}^+\rightarrow \mathbb{R}^+$ and constant $N$, such that for all $p\in \mathcal{P}$ and $n\ge N$, we have
$$\mathbb{E}_p\left[\sum_{k=n}^{\infty}\ell(p,X_1^k,\Phi(X_1^{k-1}))\right]\le \rho(n),$$
and $\rho(n)\rightarrow 0$ as $n\rightarrow\infty$. It is not hard to show that Problem~\ref{openp1} is true if we assume $\forall i\ge 1$ the collection $(\mathcal{P}_i,\ell)$ is predictable with bounded expected tail loss. This is satisfied, e.g., by the setting in Theorem~\ref{pureestimation}.
\end{remark}

Note that Problem~\ref{openp1} holds for supervised setting. Let $\mathcal{P}_1,\mathcal{P}_2,\cdots$ be countably many classes such that each $(\mathcal{P}_i,\ell)$ is predictable with finite expected loss using prediction rule $\Phi_i$. We can construct a prediction rule $\Phi$ using the Randomized Weight Majority Algorithm~\cite[Section 4]{littlestone1994weighted} with experts pool $\{\Phi_1,\Phi_2,\cdots\}$. Suppose the underlying distribution $p\in \mathcal{P}_k$ for some $k$ with expected errors of $E_i<\infty$,~\cite[Section 4]{littlestone1994weighted} shows that the expected errors of $\Phi$ is upper bounded by $E_i+\log k<\infty$.

\section{Applications}
\label{exam}
In this section we will characterize several concrete setups with the application of our general characterizations in previous sections.
\subsection{Hypothesis Testing}
\label{hypotest}
For any two disjoint collections $\mathcal{H}_1,\mathcal{H}_2$ of distributions over $\mathbb{R}^d$, the \emph{hypothesis testing} problem is to decide whether some underlying source $p\in \mathcal{H}_1\cup\mathcal{H}_2$ is from $\mathcal{H}_1$ or $\mathcal{H}_2$, by observing $\iid$ samples from $p$. Therefore, using the notations we developed in Section~\ref{ch2sec2}, the class under consideration will include the $\iid$ processes in $\mathcal{H}=\mathcal{H}_1\cup \mathcal{H}_2$, and for any $p\in \mathcal{H}$ the loss is defined as
$$\ellht(p,X_1^n,Y_n)=1\{p\not\in \mathcal{H}_{Y_n}\}.$$
With slight abuse of notation, for any distribution $p$ over $\mathbb{R}^d$, we will use $p$ to denote the $\iid$ process of $p$ as well, and we denote $p^n$ to be the $n$-fold $i.i.d.$ distribution of $p$.

This problem was considered extensively in~\citep{cover1973determining, dembo1994topological}. Informally, Dembo and Peres~\cite{dembo1994topological} showed that the existence of $\eas$-prediction rule for a hypothesis testing problem is \emph{essentially} captured by the notion of what they call $F_{\sigma}$-separability. 

Let $A,B$ be two sets in some metric space with metric $d$. We say $A,B$ are \emph{$F_{\sigma}$-separable} if $A\subset A'$ and $B\subset B'$, $A'\cap B'=\emptyset$ and both $A'$ and $B'$ can be written as countable unions of closed sets (with topology induced by $d$). One of the main results of~\citep{dembo1994topological} is stated below using our notations in Section~\ref{ch2sec2}.

\begin{theorem}[{\cite[Theorem 2]{dembo1994topological}}]
\label{dembothem2}
Let $\mathcal{H}_1$ and $\mathcal{H}_2$ be disjoint collections of distributions over $\mathbb{R}^d$ that are absolutely continuous w.r.t. Lebesgue measure. Denote $\mathcal{H}=\mathcal{H}_1\cup \mathcal{H}_2$. We have\begin{item}
\item[1.]If $\mathcal{H}_1$ and $\mathcal{H}_2$ are $F_{\sigma}$-separable under the weak convergence topology, then $(\mathcal{H},\ellht)$ is $\eas$-predictable.
\item[2.]If for any $p\in \mathcal{H}$, there exists $r>1$ (possibly depends on $p$) such that
$$\int_{\mathbb{R}^d} f_p^r(\x)d\mu<\infty,$$
where $f_p$ is the density of $p$ and $\mu$ is Lebesgue measure. Then $(\mathcal{H},\ellht)$ is $\eas$-predictable iff $\mathcal{H}_1$ and $\mathcal{H}_2$ are $F_{\sigma}$-separable under the weak convergence topology.
\end{item}
\end{theorem}

Moreover, Dembo-Peres~\cite{dembo1994topological} asked whether the condition $r> 1$ in Theorem~\ref{dembothem2}(2) can be removed. We now give a positive answer to this open problem by showing that the condition of the finite $r$th norm of the density can be relaxed to a much weaker condition as defined in the following.
\begin{definition}
\label{unifrombounded}
Let $\mathcal{H}$ be a collection of distributions over $\mathbb{R}^d$ that are absolutely continuous w.r.t. Lebesgue measure. We say $\mathcal{H}$ is \emph{uniformly bounded} if for any $\epsilon>0$ there exists a number $M_{\epsilon}$ such that
$$\forall p\in\mathcal{H},~p(f_p(\x)\ge M_{\epsilon})\le \epsilon,$$
where $f_p$ is the density of $p$.
\end{definition}

Before showing why such a condition is sufficient to establish Theorem~\ref{dembothem2}(2), we first provide a simple alternative proof to Theorem~\ref{dembothem2}(1) by using our general characterization in Section~\ref{sec:easp}. To begin with, we introduce the following lemma, which relates $F_{\sigma}$-separability with the nesting of sets. The proof is left to Appendix~\ref{sec:c2}.
\begin{lemma}
\label{close2compact}
Let $A,B$ be two sets of some metric space with metric $d$. The $A,B$ are $F_{\sigma}$-separable iff there exist nestings $\{A_i,i\ge 1\}$ and $\{B_i,i\ge 1\}$ for $A,B$ respectively, such that
$$\forall i\ge 1,~ \inf\{d(x,y):x\in A_i,y\in B_i\}>0.$$
\end{lemma}

\begin{remark}
Note that, with similar argument as in the proof of Lemma~\ref{close2compact} (see Appendix~\ref{sec:c2}), we can also prove that two sets $A,B$ are $F_{\sigma}$-separable iff there exist nestings $\{A_i,i\ge 1\}$ and $\{B_i,i\ge 1\}$ for $A,B$ respectively, such that for all $i\ge 1$ there is no limit point of $A_i$ in $B_i$ (and vice versa).
\end{remark}
The following lemma will be useful in our following analysis.

\begin{lemma}
\label{finit2bound}
Let $\{\{A_{i,j}\}_{i\in \mathbb{N}},\{B_{i,j}\}_{i\in\mathbb{N}}\}_{j\in \mathbb{N}}$ be nesting such that $A_{i,j}\subset A_{i',j'}$ and $B_{i,j}\subset B_{i',j'}$ whenever $i'\ge i$ and $j'\ge j$. If $d(A_{i,j},B_{i,j})>0$ for all $i,j\in \mathbb{N}$, where $d$ is some metric. Then $A=\bigcup_{i,j\in \mathbb{N}}A_{i,j}$ and $B=\bigcup_{i,j\in\mathbb{N}}B_{i,j}$ are $F_{\sigma}$-separable.
\end{lemma}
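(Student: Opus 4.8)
The plan is to collapse the two-dimensional index to its diagonal using the nesting hypothesis, and then pass to closures, which is harmless because taking closures does not decrease distances.

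First I would record the effect of the nesting on the diagonal families $\{A_{n,n}\}_{n\ge 1}$ and $\{B_{n,n}\}_{n\ge 1}$. Since $n\le n+1$ in both coordinates, the hypothesis gives $A_{n,n}\subseteq A_{n+1,n+1}$ and $B_{n,n}\subseteq B_{n+1,n+1}$, so both diagonal families are increasing. Moreover they already recover $A$ and $B$: if $x\in A$ then $x\in A_{i,j}$ for some $i,j$, and with $n:=\max(i,j)$ the nesting yields $A_{i,j}\subseteq A_{n,n}$, so $x\in\bigcup_n A_{n,n}$; the reverse inclusion is trivial, and the same argument works for $B$. Hence it suffices to build the $F_\sigma$ structure from the single-indexed increasing families $\{A_{n,n}\}$, $\{B_{n,n}\}$, which still satisfy $d(A_{n,n},B_{n,n})>0$ for every $n$.

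Next I would set $A'_n=\overline{A_{n,n}}$ and $B'_n=\overline{B_{n,n}}$. Monotonicity of the closure operator keeps these families increasing, and each set is closed. The one elementary fact needed is that $d(\overline S,\overline T)=d(S,T)$ in any metric space: points of $\overline S$ and $\overline T$ are limits of sequences in $S$ and $T$, and continuity of the metric forces $d(\overline S,\overline T)\ge d(S,T)$, while the reverse is obvious. Applying this gives $d(A'_n,B'_n)=d(A_{n,n},B_{n,n})>0$, so $A'_n$ and $B'_n$ are disjoint closed sets. A short bookkeeping step then handles the off-diagonal pairs: for $n\le m$ we have $A'_n\subseteq A'_m$, and since $d(A'_m,B'_m)>0$ this forces $A'_n\cap B'_m=\emptyset$; the case $m\le n$ is symmetric. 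Therefore $\big(\bigcup_n A'_n\big)\cap\big(\bigcup_m B'_m\big)=\emptyset$.

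Finally, $\widehat A:=\bigcup_n A'_n$ and $\widehat B:=\bigcup_n B'_n$ are disjoint $F_\sigma$ sets with $A\subseteq\widehat A$ and $B\subseteq\widehat B$, which is exactly the $F_\sigma$-separability needed — in the ``contained in disjoint $F_\sigma$ sets'' formulation used in~\citep{dembo1994topological}. (One cannot in general demand the literal equalities $\bigcup_n A'_n=A$ and $\bigcup_n B'_n=B$ from the earlier definition, since nothing forces $A$ itself to be $F_\sigma$; the closures may genuinely enlarge the sets, and this is the one place where the statement should be read in the ``contained in'' sense.) I do not anticipate a real obstacle: the argument is essentially the diagonal trick together with the observation that closures preserve strictly positive distance, and the only thing requiring care is to check that \emph{every} pair $A'_n,B'_m$ — not only the matched ones — is disjoint, which is immediate from the nesting.
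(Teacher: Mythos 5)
Your proof is correct and takes essentially the same route as the paper: the nesting together with $d(A_{i,j},B_{i,j})>0$ shows that the closed hulls of the pieces of $A$ and of $B$ stay at positive distance, yielding disjoint $F_\sigma$ sets containing $A$ and $B$. If anything, your write-up is the more complete one --- the paper's proof only observes that $B$ contains no limit point of any $A_{i,j}$ (and vice versa), leaving the cross-pair disjointness $A'_n\cap B'_m=\emptyset$ implicit --- and your remark that the conclusion must be read in the ``contained in disjoint $F_\sigma$ sets'' sense of Dembo and Peres, rather than the exact-equality definition of $F_\sigma$-separability given earlier in the paper (which would force $A$ and $B$ themselves to be $F_\sigma$), is a fair and needed clarification.
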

\begin{proof}
We only need to show that for all $i,j$, $B$ and $A$ contain no limit point of $A_{i,j}$ and $B_{i,j}$ respectively. Suppose otherwise that there exists some $y\in B$ such that $d(y,A_{i,j})=0$. We have $y\in B_{i',j'}$ for some $i',j'$. By nesting property, we may assume $i=i'$ and $j=j'$. However, this will imply $d(B_{i,j},A_{i,j})=0$, a contradiction.
\end{proof}

For any two distributions $p_1,p_2$ over $\mathbb{R}^d$, the Kolmogorov-Smirnov distance (abbreviate as KS-distance) is defined as
$$|p_1-p_2|_{KS}=|F_{p_1}(\x)-F_{p_2}(\x)|_{KS}\ed\sup_{\x\in \mathbb{R}^d}|F_{p_1}(\x)-F_{p_2}(\x)|,$$
where $F_{p_i}$ is the CDF of $p_i$ for $i\in \{1,2\}$. The following lemma shows that the empirical distribution uniformly estimates any distribution under KS-distance, and is known as Dvoretzky-Kiefer-Wolfowitz Inequality, see e.g.,~\cite{massart1990tight, kiefer1958deviations, naaman2021tight}.
\begin{lemma}[Dvoretzky-Kiefer-Wolfowitz Inequality]
\label{DKWthm}
Let $X_1,X_2,\cdots,X_n$ be $\iid$ samples of distribution $p$ over $\mathbb{R}^d$, $F_n(\x)$ is the CDF of the empirical distribution. Then there exists a constant $C_d$ depends only on $d$ such that
$$p\left(|F_n(\x)-F_{p}(\x)|_{KS}\ge \epsilon\right)\le C_d\exp(-n\epsilon^2).$$
\end{lemma}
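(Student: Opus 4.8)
The statement is the multivariate Dvoretzky--Kiefer--Wolfowitz inequality, so the plan is to recognize the left-hand side as a uniform deviation for the empirical process indexed by lower-left orthants and then reduce to the classical sharp estimate. First I would rewrite the quantity of interest as
$|F_n - F_p|_\infty = \sup_{\x\in\mathbb{R}^d}\bigl|\tfrac1n\sum_{i=1}^n \mathbb{I}_{\x}(X_i) - \mathbb{E}[\mathbb{I}_{\x}(X_1)]\bigr|$,
i.e.\ the supremum, over the class $\mathcal{C}_d=\{\prod_{j=1}^d(-\infty,x_j]:\x\in\mathbb{R}^d\}$ (which has VC dimension $d$), of the absolute value of the centered empirical measure. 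A convenient first step is to normalize $p$: applying the coordinatewise quantile transform one may assume each marginal of $p$ is continuous (atoms only shrink the oscillation of $F_n-F_p$ and can in any case be removed by an arbitrarily small perturbation of the $X_i$), which makes the relevant suprema attained on a finite set of sample-driven configurations.

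The core fact in dimension one is Massart's tight bound $\mathrm{Pr}(|F_n-F_p|_\infty\ge\epsilon)\le 2e^{-2n\epsilon^2}$, which is \emph{not} obtained from a union bound but from a reflection/martingale argument applied to the empirical CDF viewed as a process in the evaluation point; this is exactly where the sharp exponent $2$ together with a prefactor free of $n$ originates, since for each \emph{fixed} $\x$ the variable $|F_n(\x)-F_p(\x)|$ is a centered average of $[0,1]$-valued \iid summands and Hoeffding's inequality already yields $2e^{-2n\epsilon^2}$ pointwise. For general $d$ I would exploit monotonicity of $F_n$ and $F_p$ in each coordinate to discretize: up to an arbitrarily small loss in $\epsilon$ it suffices to control the deviation on the finite grid whose $j$th coordinate ranges over the order statistics of $X_1^{(j)},\dots,X_n^{(j)}$, and then apply the one-dimensional reflection bound coordinate by coordinate (or a peeling/layering argument over the grid) so that the grid cardinality, of order $(n+1)^d$, is absorbed into a constant $C_d$ that does not grow with $n$. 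Assembling these pieces gives the claimed $C_d\,e^{-2n\epsilon^2}$.

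The main obstacle is the one familiar from DKW-type results: a naive union bound over the $(n+1)^d$ grid points produces a prefactor growing like $(n+1)^d$, which destroys the clean form of the bound, and retaining a constant depending on the dimension alone requires the reflection-principle refinement of Massart together with its multivariate elaborations. For this reason we treat the inequality as a known result and simply cite it (see \cite{massart1990tight} and its higher-dimensional extensions) rather than reproducing the full argument here.
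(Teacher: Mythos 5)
Your proposal ultimately does exactly what the paper does: it states the inequality as a known result and cites the literature (\cite{massart1990tight}), since the paper itself offers no proof of Lemma~\ref{DKWthm}. The accompanying sketch of the one-dimensional reflection argument and the multivariate discretization is fine as motivation, but it is not needed for agreement with the paper, which treats the lemma purely as a citation.
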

Note that the convergence under KS-distance is consistent with weak convergence under the assumption of absolutely continuous w.r.t. Lebesgue measure by Polya's theorem (see, e.g., \cite[Theorem 9.1.4]{athreya2006measure}).

\begin{proof}[Proof of Theorem~\ref{dembothem2}(1)]
By Lemma~\ref{close2compact}, we have nestings $\{A_i,\ge 1\}$, $\{B_i,i\ge 1\}$ of $\cH_1$, $\cH_2$ respectively, such that
$$\forall i\in\mathbb{N},~\inf\{|p_1-p_2|_{KS}:p_1\in A_i, p_2\in B_i\}\ge\frac{1}{i}.$$
By Theorem~\ref{main1b}, we only need to show that $(A_i\cup B_i,~\ellht)$ is $\eta$-predictable for all $i\ge 1$ and $\eta> 0$. Let $F_n$ be the CDF of the empirical distribution with sample of size $n$. By Lemma~\ref{DKWthm}, we can simultaneously make $|F_n(\x)-F_p(\x)|_{KS}\le 1/4i$ with confidence $\ge 1-\eta$ for all $p\in A_i\cup B_i$ by choosing the sample size $n$ large enough. By triangle inequality of KS-distance, one can classify the distributions in $A_i\cup B_i$ successfully with probability at least $1-\eta$, by predicting the class that is closer to $F_n$ under KS-distance.

The theorem follows by observing that convergence under KS-distance is consistent with weak convergence with the absolutely continuous assumption.
\end{proof}

We now show that if the class $\mathcal{H}_1\cup\mathcal{H}_2$ is uniformly bounded as defined in Definition~\ref{unifrombounded}, then the $F_{\sigma}$-separability will be necessary to achieve $\eas$-predictability under the assumption of Theorem~\ref{dembothem2}.

\begin{theorem}
\label{openn}
Let $\mathcal{H}_1,\mathcal{H}_2$ be collections of distributions that are absolutely continuous w.r.t. Lebesgue measure on $\mathbb{R}^d$, and $\mathcal{H}_1\cup \mathcal{H}_2$ is \emph{uniformly bounded}. Then $(\mathcal{H}_1\cup \mathcal{H}_2,~\ellht)$ is $\eas$-predictable if and only if $\mathcal{H}_1,\mathcal{H}_2$ are $F_{\sigma}$-separable under KS-distance.
\end{theorem}
\begin{proof}
The sufficiency follows directly from Theorem~\ref{dembothem2}(1). We now only prove the necessity.

By Theorem~\ref{main1b}, $\eas$-predictability implies that there exist nesting $\{A_i,i\ge 1\}$, $\{B_i, i\ge 1\}$ of $\mathcal{H}_1$, $\mathcal{H}_2$ respectively, such that for all $i\ge 1$, $(A_i\cup B_i,~\ellht)$ is $\frac{1}{8}$-predictable. By Lemma~\ref{close2compact}, it is sufficient to show that for all $i\ge 1$, there is no limit point of $A_i$ in $B_i$ or vice versa. Suppose otherwise, there exist $p_1,p_2,\cdots \in A_i$ and $p\in B_i$, such that $|p_n-p|_{KS}\rightarrow 0$ as $n\rightarrow\infty$. Let $\Phi$ be an arbitrary predictor that achieves $\frac{1}{8}$-predictability of $A_i\cup B_i$ with sample size $i$. Denote $\Phi^{i}$ as the prediction function at step $i$. By absolute continuity, $p_n$ weakly converges to $p$. Therefore, there exists a compact set $S\subset \mathbb{R}^{i\cdot d}$, such that $p(S)\ge \frac{7}{8}$ and $p_n(S)\ge \frac{7}{8}$ for all $n\in \mathbb{N}$ by Prokhorov's theorem (see~\cite[Theorem 5.2]{van2003probability}), where we have used $p,p_n$ to also denote $i$-fold measures of $p,p_n$ over $\mathbb{R}^{i\cdot d}$.  By uniform boundedness of $\mathcal{H}_1\cup \mathcal{H}_2$, there exists a number $M$, such that 
\begin{equation}
\label{unibound}
\forall p\in \mathcal{H}_1\cup \mathcal{H}_2,~p(\{\x\in \mathbb{R}^{i\cdot d},~f_p(\x)\ge M\})\le \frac{1}{16}.
\end{equation}

 By Lusin's theorem~\cite[Theorem 2.24]{rudin1974real}, there exists a continuous function $g$ over $\mathbb{R}^{i\cdot d}$ and a set $E\subset S$, such that $\sup_{\x\in E}|\Phi^i(\x)-g(\x)|\le \frac{1}{4}$ and $\mu(S\backslash E)\le \frac{1}{16M}$ where $\mu(\cdot)$ is the Lebesgue measure over $\mathbb{R}^{i\cdot d}$. Let $\Omega=\{\x\in \mathbb{R}^{i\cdot d}:g(\x)>\frac{3}{2}\}$, we have $\Omega$ is open and $\{\x\in E:\Phi^i(\x)=2\}\subset \Omega$. By~(\ref{unibound}) and $\mu(S\backslash E)\le \frac{1}{16M}$, we have $p(S\backslash E)\le\frac{1}{8}$ and $p_n(S\backslash E)\le\frac{1}{8}$ for all $n\in \mathbb{N}$. By $\frac{1}{8}$-predictability, we have $p(\Omega)\ge p(\Omega\cap E)\ge \frac{7}{8}-\frac{1}{4}=\frac{5}{8}$, since $p(\bar{E})\le\frac{1}{4}$. By weak convergence, we have $\lim\inf p_n(\Omega)\ge p(\Omega)$, since $\Omega$ is open (see~\cite[Theorem 3.2]{van2003probability}). There exists some $p_n$ such that $p_n(\Omega)\ge \frac{1}{2}$, which implies $p_n(\Omega\cap E)\ge \frac{1}{2}-\frac{1}{4}=\frac{1}{4}>\frac{1}{8}$, contradicting the $\frac{1}{8}$-predictability.
\end{proof}
\begin{remark}
Note that, the assumption of uniform boundedness on $\mathcal{H}_1\cup\mathcal{H}_2$ is necessary for the proof of Theorem~\ref{openn} to work. To see this, let $p_n'$ be the uniform distribution over $\{\frac{1}{n},\frac{2}{n},\cdots,\frac{n-1}{n}\}$, and $p$ be the uniform distribution over $[0,1]$. Clearly, we have that $p_n'$ converges to $p$ in distribution. But $p_n'$ is not continuous. This can be easily resolved by choosing some \emph{continuous} distribution $p_n$ such that $|p_n-p_n'|_{KS}\le\frac{1}{n}$ and there exists a set $S_n$ concentrated on $\{\frac{1}{n},\frac{2}{n},\cdots,\frac{n-1}{n}\}$ so that we have $p_n([0,1]\backslash S_n)\le \frac{\eta}{2^n}$ and $p(S_n)\le \frac{\eta}{2^n}$. Let $A=\{p_1,p_2,\cdots\}$ and $B=\{p\}$, we have $p_n$ converges to $p$ in distribution. However, since
$$\sum_{n=1}^{\infty}p(S_n)\le \eta$$
we have $(A\cup B,\ellht)$ is $\eta$-predictable with only one sample by predicting the underlying distribution in $A$ if the sample is in $\bigcup S_n$ and in $B$ otherwise.
\end{remark}

We prove the following corollary, which establishes Corollary~\ref{demboopen}.
\begin{corollary}
\label{boundexp}
Let $G:\mathbb{R}^+\rightarrow \mathbb{R}^+$ be a monotone increasing function such that $\lim_{x\rightarrow\infty}G(x)\rightarrow \infty$, $\mathcal{H}_1,\mathcal{H}_2$ be distributions over $\mathbb{R}^d$ that are absolutely continuous w.r.t. Lebesgue measure. If for all $p\in \mathcal{H}_1\cup \mathcal{H}_2$, we have $\mathbb{E}_{\x\sim p}[G(f_p(\x))]<\infty$, where $f_p$ is the density of $p$. Then $(\mathcal{H}_1\cup \mathcal{H}_2,~\ellht)$ is $\eas$-predictable iff $\mathcal{H}_1,\mathcal{H}_2$ are $F_{\sigma}$ separable with KS-distance.
\end{corollary}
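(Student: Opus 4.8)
The plan is to deduce both directions from the two theorems about the KS-distance just established, Theorems~\ref{opens} and~\ref{openn}; the finite $G$-moment hypothesis will be used only to manufacture the uniform boundedness that Theorem~\ref{openn} requires. The ``if'' direction needs nothing new: if $H_0$ and $H_1$ are $F_{\sigma}$-separable under the KS-distance, then Theorem~\ref{opens} already gives that $H_0\cup H_1$ is \eas-predictable for classification under \iid sampling, and the condition on $G$ plays no role.

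For the ``only if'' direction, I would truncate by the $G$-moment. For $m\in\mathbb{N}$ and $i\in\{0,1\}$ set
\[
H_i^m=\Sets{p\in H_i: \mathbb{E}_{\x\sim p}\bigl[G(f_p(\x))\bigr]\le m},
\]
so that $H_i^m\subset H_i^{m+1}$ and $H_i=\bigcup_{m}H_i^m$ by the assumed finiteness of the $G$-moment. The first step is to check that $H_0^m\cup H_1^m$ is uniformly bounded: since $G$ is nondecreasing we have $\{f_p\ge M\}\subseteq\{G(f_p)\ge G(M)\}$, so Markov's inequality gives $p(f_p(\x)\ge M)\le m/G(M)$ for every $p\in H_0^m\cup H_1^m$, and since $G(M)\to\infty$ we may, given $\epsilon>0$, pick $M_\epsilon$ with $G(M_\epsilon)\ge m/\epsilon$; this is precisely uniform boundedness. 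The second step is to note that $H_0^m\cup H_1^m$, being a subclass of the \eas-predictable class $H_0\cup H_1$, is itself \eas-predictable (the same predictor witnesses it), is absolutely continuous with respect to Lebesgue measure, and satisfies $H_0^m\cap H_1^m=\emptyset$. Hence Theorem~\ref{openn} applies to $H_0^m\cup H_1^m$ and yields, for each $m$, closed sets $A^{(m)}_1\subset A^{(m)}_2\subset\cdots$ with $\bigcup_j A^{(m)}_j=H_0^m$ and $B^{(m)}_1\subset B^{(m)}_2\subset\cdots$ with $\bigcup_j B^{(m)}_j=H_1^m$, where $A^{(m)}_j$ and $B^{(m)}_j$ are disjoint.

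The final step is to splice these per-$m$ separations into a single one. Setting
\[
C_n=\bigcup_{m\le n}\ \bigcup_{j\le n}A^{(m)}_j,\qquad D_n=\bigcup_{m\le n}\ \bigcup_{j\le n}B^{(m)}_j,
\]
each $C_n$ and $D_n$ is a finite union of closed sets hence closed, $C_n\subset C_{n+1}$, $D_n\subset D_{n+1}$, $\bigcup_n C_n=H_0$, $\bigcup_n D_n=H_1$, and $C_n\cap D_n\subset H_0\cap H_1=\emptyset$; so $H_0$ and $H_1$ are $F_{\sigma}$-separable under the KS-distance, which finishes the converse. (Equivalently, one can arrange a doubly-indexed nesting and invoke Lemma~\ref{finit2bound}; and if the stronger ``positive KS-distance'' form of separability is wanted, a further application of Lemma~\ref{close2compact} to $\{C_n\}$ and $\{D_n\}$ delivers it.)

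I expect no deep obstacle here: the content is a reduction to Theorems~\ref{opens} and~\ref{openn}, and the one genuinely new computation---that a bound on $\mathbb{E}_p[G(f_p)]$ forces uniform boundedness of the density tails---is a one-line Markov estimate exploiting $G(M)\to\infty$. The only points demanding care are verifying that the truncation classes $H_i^m$ inherit every hypothesis of Theorem~\ref{openn} (absolute continuity, disjointness, \eas-predictability, and the just-established uniform boundedness), and reading ``monotone increasing $G$'' as merely nondecreasing, which is all the Markov step uses.
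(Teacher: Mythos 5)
Your proposal is correct and follows essentially the same route as the paper: truncate $H_0\cup H_1$ into countable subclasses with $G$-moment at most $m$, observe via a one-line Markov estimate (using that $G$ is monotone and tends to infinity) that each subclass is uniformly bounded, and then invoke Theorem~\ref{openn} for the necessity and Theorem~\ref{opens} for the sufficiency. The only differences are cosmetic: you avoid the paper's appeal to invertibility of $G$ by using monotonicity directly, and you spell out the splicing of the per-$m$ $F_\sigma$-separations into a single nested separation, a step the paper compresses into ``one may assume.''
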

\begin{proof}
By Theorem~\ref{dembothem2} we only need to prove the necessary condition. By Lemma~\ref{close2compact}, Lemma~\ref{finit2bound} and breaking $\mathcal{H}_1\cup \mathcal{H}_2$ into countably subcollections, one may assume, w.l.o.g., $\forall p\in \mathcal{H}_1\cup \mathcal{H}_2,~\mathbb{E}_{\x\sim p}[G(f_p(\x))]\le M$ for some constant $M$. By Theorem~\ref{openn}, we only need to show that $\mathcal{H}_1\cup \mathcal{H}_2$ is uniformly bounded. For any $p\in \mathcal{H}_1\cup \mathcal{H}_2$, we define random variable $Y_p=G(f_p(\x))$. We have, by Markov inequality, $p(Y_p \ge T)\le \frac{M}{T}$. Note that the upper bound is independent of $p$. By letting $T=\frac{M}{\epsilon}$, one can make the probability upper bounded by $\epsilon$. Since $G$ is monotone increasing and goes to infinity, thus invertible on $\mathbb{R}^+$. We now have $p(f_p(\x)\ge G^{-1}(M/\epsilon))\le \epsilon$ for all $p\in \mathcal{H}_1\cup \mathcal{H}_2$ and $\epsilon>0$.
\end{proof}
\begin{remark}
Note that, Theorem~\ref{dembothem2}(2) follows directly from Corollary~\ref{boundexp} by simply taking $G(x)=x^{r-1}$ for some $r>1$. However, the conditions in Corollary~\ref{boundexp} are much weaker, since $G(x)$ can be an arbitrary monotone function so long as $G(x)\rightarrow\infty$ when $x\rightarrow\infty$. One may, e.g., take $G(x)=\log(x)$, which is asymptotically upper bounded by $x^{r-1}$ for all $r>1$.
\end{remark}

We now have the following conjecture, which asserts that a condition such as the uniform boundedness is necessary for Theorem~\ref{dembothem2}(2) to hold. See also Appendix~\ref{alternat} a \emph{complete} characterization for distributions over $\mathbb{N}$.

\begin{conjecture}
\label{conj1}
There exist collections $\mathcal{H}_1,\mathcal{H}_2$ of distributions over $\mathbb{R}^d$ that are absolutely continuous w.r.t. Lebesgue measure, such that $(\mathcal{H}_1\cup \mathcal{H}_2,~\ellht)$ is $\eas$-predictable but $\mathcal{H}_1,\mathcal{H}_2$ are \emph{not} $F_{\sigma}$-separable under $KS$-distance.
\end{conjecture}

\subsection{The Insurance Problem}
\label{INSUR}
We now consider a problem that is introduced in \citep{JMLR:v16:santhanam15a} and \cite{wu2019isit}. Let $\mathcal{P}$ be a class of \iid processes of distributions over $\mathbb{N}$. The loss function is defined as 
$$\ellip(p,X_1^n,Y_n)=1\{X_n>Y_n\},$$ i.e., we would like predict an upper bound for the next sample $X_n$ by observing $X_1,\cdots,X_{n-1}$. With slight abuse of notations, for any \iid process $p\in \mathcal{P}$ we will use $p$ and $X_p$ to denote the underlying marginal distribution and marginal random variable respectively as well. 

We say a class $\mathcal{P}$ of \iid processes over $\mathbb{N}$ is tight if for any $\epsilon>0$ there exist $N_{\epsilon}$ such that for all $p\in \mathcal{P}$ we have $p(X_p\ge N_{\epsilon})\le \epsilon$. The following theorems were shown in \cite{wu2019isit}.
\begin{theorem}[{\cite[Theorem 1]{wu2019isit}}]
\label{isitpred}
The collection $(\mathcal{P},\ellip)$ is e.a.s.-predictable iff there are countably many tight collections $\{\mathcal{P}_i,i\ge 1\}$. such that
$$\mathcal{P}=\bigcup_{i=1}^{\infty}\mathcal{P}_i.$$
\end{theorem}

\begin{theorem}[{\cite[Corollary 3]{wu2019isit}}]
\label{isitinsur}
The collection $(\mathcal{P},\ellip)$ is e.a.s.-learnable iff
$$\mathcal{P}=\bigcup_{i=1}^{\infty}\mathcal{P}_i,$$where each $\mathcal{P}_i$ is tight \emph{and} relatively open (under total variation distance) in $\mathcal{P}$.
\end{theorem}

We now show that the above theorems can be naturally derived from our general characterizations in the previous sections. The crucial part of the proof is the following technical lemma.
\begin{lemma}
\label{INSURETA}
If a collection $(\mathcal{P},\ellip)$ is $\eta$-predictable with $\eta<\frac{1}{4}$, then there exists nesting $\{\mathcal{P}_i,i\ge 1\}$ of $\mathcal{P}$ such that each $\mathcal{P}_i$ is tight and relatively open in $\mathcal{P}$ under total variation distance.
\end{lemma}
\begin{proof}
We show that if collection $(\mathcal{P},\ellip)$ is $\eta$-predictable with $\eta<1/4$, then for any $p\in \mathcal{P}$ there is an neighborhood $\mathcal{N}_p$ of $p$ in $\mathcal{P}$ such that $\mathcal{N}_p$ is tight. Let $m$ be the samples size that achieves $\eta$-predictability of $\mathcal{P}$ with predictor $\Phi$. Suppose for some $p\in \mathcal{P}$, any neighborhood of $p$ is not tight. Let $N=\min\{n:p^m(\max\{X_1^m\}>n)\le 1-4\eta\}$. Let $\epsilon\le \frac{\eta}{m}$, we have by Lemma~\ref{nfoldtv} that $\sup_{q\in\mathcal{B}(p,\epsilon)}q^m(\max\{X_1^m\}\ge N)\le 1-3\eta$. Since $\mathcal{B}(p,\epsilon)$ is not tight, there exist $\delta>0$, so that for any $M\in\mathbb{N}$ there exist $q\in \mathcal{B}(p,\epsilon)$ such that $q(X\ge M)\ge \delta$. Let 
$$a_1=\Phi(\underbrace{N,\cdots,N}_{m}),$$ and $a_n=\max_{X_1^{n+m}\in [a_{n-1}]^{m+n}}\Phi(X_1^{n+m})$. Let $M_{\delta}$ be a number such that $(1-\delta)^{M_{\delta}}<\eta$, and $q_{\delta}\in \mathcal{B}(p,\epsilon)$ with $q_{\delta}(X\ge a_{M_{\delta}})\ge \delta$. Now, by construction, with probability at least $2\eta$, $\Phi$ makes error at step $m+M_{\delta}$ when $q_{\delta}$ is in force. Contradicting to the $\eta$-predictability of $\mathcal{P}$.

Since the the topology of distributions over $\mathbb{N}$ induced by total variation distance is separable, any open covering of $\mathcal{P}$ has a countable subcover. The lemma follows.
\end{proof}

We now ready to prove Theorem~\ref{isitpred} and Theorem~\ref{isitinsur}.

\begin{proof}[Proof of Theorem~\ref{isitpred}]
We first show that if a collection $\mathcal{P}_i$ is tight then $(\mathcal{P}_i,\ellip)$ is $\eta$-predictable for all $\eta>0$. To see this, by definition of tightness, for any $n\ge 1$ there exist a number $N_n$ such that for all $p\in \mathcal{P}$ we have $p(X_p\ge N_n)\le\frac{\eta}{2^n}$. The prediction rule goes as follows, at step $n$ we predict $N_n$. The $\eta$-predictability of $(\mathcal{P}_i,\ellip)$ follows by a union bound. The sufficiency now follows by Theorem~\ref{main1b}.

To prove the necessity, by Theorem~\ref{main1b}, we know that if $(\mathcal{P},\ellip)$ is $\eas$-predictable then there exist nesting $\{\mathcal{P}_i,i\ge 1\}$ of $\mathcal{P}$ such that $(\mathcal{P}_i,\ellip)$ is $\eta$-predictable for some $\eta<\frac{1}{4}$. By Lemma~\ref{INSURETA}, each of $\mathcal{P}_i$ is a countable union of tight classes, the theorem follows.
\end{proof}

\begin{proof}[Proof of Theorem~\ref{isitinsur}]
By Theorem~\ref{open2idtf}, we know that a class $\mathcal{P}_i$ is identifiable in $\mathcal{P}$ if and only if $\mathcal{P}_i$ is relatively open in $\mathcal{P}$ under total variation distance. The theorem follows by Theorem~\ref{main2b}, Lemma~\ref{INSURETA} and the fact that $(\mathcal{P}_i,\ellip)$ is $\eta$-predictable for all $\eta>0$ if $\mathcal{P}$ is tight.
\end{proof}

\subsection{Online Learning}
\label{online}
Let $\mathcal{H}$ be a set of binary measurable functions from $\mathbb{R}^d\rightarrow\{0,1\}$, $\mu$ is an arbitrary distributions on $\mathbb{R}^d$. We consider the following prediction game with two parties, Nature and the leaner, both know $\mathcal{H}$ and $\mu$. At the beginning Nature chooses some $h\in\mathcal{H}$. At times step $n$, Nature independently samples $X_n\sim\mu$ and provides it to the learner. The learner outputs a guess $Y_n$ of $h(X_n)$ based on his previous observations $(X_1,h(X_1)),\cdots, (X_{n-1},h(X_{n-1}))$ and the new sample $X_n$. Nature reveals $h(X_n)$ after the guess $Y_n$ has been made by the learner. The learner incurs a loss at step $n$ if $Y_n\not=h(X_n)$. 

To put the setup into our general framework, for any $\mu$ and $h\in \mathcal{H}$, one may define the underlying random process to be $Z_1,Z_2,\cdots$ such that $Z_i=(X_{i+1}, h(X_{i}))$ (with $Z_1=(0,h(X_1))$ by convention) where $X_1,X_2,\cdots$ are \iid samples of $\mu$. The loss is defined as
$$\ellol(h,Z_1^n,\Phi(Z_1^{n-1}))=1\{\Phi(Z_1^{n-1})\not= h(X_n)\}.$$
For any function classes $\mathcal{H}$, we will denote $\mathcal{H}$ to be the corresponding random processes as well when the underlying distribution $\mu$ is fixed.

The following result was shown in~\cite{wu2021non}. See Appendix~\ref{sec:c3} for a proof with the application of Theorem~\ref{main1b}.
\begin{theorem}
\label{finiteonline}
Let $\mu$ be an arbitrary distribution over $\mathbb{R}^d$, and $\mathcal{H}$ be a class of measurable functions from $\mathbb{R}^d\rightarrow\{0,1\}$. Then $(\mathcal{H},\ellol)$ is $\eas$-predictable if and only if $\mathcal{H}$ is \emph{effectively countable} w.r.t. $\mu$, meaning that there exist a countable class $\mathcal{H}'$ such that for any $h\in \mathcal{H}$ there exist $h'\in \mathcal{H}'$ we have
$$\mathrm{Pr}_{X\sim \mu}[h(X)\not=h'(X)]=0.$$
\end{theorem}

\subsection{Testing matrix properties}
\label{ch3sec3.3}
We now apply our framework of testing functional properties to matrix properties. To begin with, we first prove the following theorem, which is a direct corollary of Theorem~\ref{dembothm1}(1).
\begin{theorem}
\label{close2eas}
Let $\mathcal{H}$ the set of all $\iid$ processes with marginal
distributions over $[0,1]^d$ for some $d\ge 1$. For all $A\subset
[0,1]^d$, we define loss $\ell_A(p,X_1^n,Y_n)=1\{1\{\mathbb{E}_{X\sim p}[X]\in
A\}\not=Y_n\}$, where the prediction $Y_n$ tries to decide whether
$\mathbb{E}_{X\sim p}[X]\in A$ or not. We have $(\mathcal{H},\ell_A)$ is $\eas$-predictable if
$A$ is closed in $[0,1]^d$.
\end{theorem}
\begin{proof}
This follows directly from Theorem~\ref{dembothm1}(1), since $A,\bar{A}$ are $F_{\sigma}$-separable if $A$ is closed, where $\bar{A}$ is the complement of $A$ in $[0,1]^d$.
\end{proof}

For any function $f:[0,1]^d\rightarrow \{0,1\}$, we will be able to
identify a set $A_f=\{\x\in [0,1]^d:f(\x)=1\}$. Let
$A_1,\cdots,A_n\subset [0,1]^d$ be finitely many sets such that
$(\mathcal{H},\ell_{A_i})$ is $\eas$-predictable for all $1\le i\le
n$. Let $g:[0,1]^d\rightarrow \{0,1\}$ be an arbitrary function,
denote $f(\x)=g(1_{A_1}(\x),\cdots,1_{A_n}(\x))$. It is easy to show
that $(\mathcal{H},\ell_{A_f})$ is also $\eas$-predictable.

We now consider the following problem setup. Let $\X$ be a $d\times d$
random matrix such that each entry $\X(i,j)$ is a Bernoulli random
variable. We denote $\mathbb{E}[\X]$ to be a (deterministic) matrix
that takes expectation entry-wise on $\X$. Let $\X_1,\X_2,\cdots$ be
$\iid$ realization of $\X$, which are binary matrices. We will try to
identify the properties of $\mathbb{E}[\X]$ by observing the samples
$\X_1,\X_2,\cdots$. Clearly, we can associate properties of
$\mathbb{E}[\X]$ with subsets of $[0,1]^{d\times d}$. We will denote
$\mathcal{H}$ to be the class of all $\iid$ Bernoulli random matrices
process. We say a property of $\mathbb{E}[\X]$ is $\eas$-predictable
if $(\mathcal{H},\ell_A)$ is $\eas$-predictable where $A\subset
[0,1]^{d\times d}$ is the subset corresponding to the property.

\begin{theorem}
\label{singulareas}
The singularity of $\mathbb{E}[\X]$ is $\eas$-predictable.
\end{theorem}
\begin{proof}
Note that the determinant is a continuous function w.r.t. the entries of the matrix. Thus subsets in $[0,1]^{d\times d}$ corresponding to the determinant being $0$ are closed. The theorem follows by Theorem~\ref{close2eas}.
\end{proof}

\begin{theorem}
\label{app:rank}
To determine if $\mathbb{E}[\X]$ has rank $k$ is $\eas$-predictable
for all $k$.
\end{theorem}
\begin{proof}
Note that to check whether a matrix has rank $k$, one only needs to check the maximum non-singular square submatrix is of dimension $k$. Thus the property can be expressed as a function of finite singularity tests. By Theorem~\ref{singulareas} we know that the property is still $\eas$-predictable.
\end{proof}
The above theorem is surprising because all of $\X_1,\cdots,\X_n$ will have rank $\ge n-2\log n$ with high probability even when the entries are Bernoulli($\frac{1}{2}$) (i.e. $\mathbb{E}[X]$ has rank $1$). To see this, for each $\X_i$, note that the probability of being full rank for the first $n-2\log n$ rows is
$$\prod_{i=2\log n}^{n}(1-2^{-i})\ge 1-\sum_{i=\log 2n}^{n}2^{-i}=1-O\left(\frac{1}{n^2}\right),$$
and the probability that none of $\X_i$, $1\le i\le n$ have rank $\le n-2\log n$ is $\ge 1- O(1/n)$. Yet, Theorem~\ref{app:rank} shows that it is still possible to infer the rank of $\mathbb{E}[\X]$ from the realizations $\X_1, \X_2, \ldots$. 

To unravel the proof in Theorem~\ref{app:rank}, we also give a more algorithmic way of estimating the rank that is correct eventually almost surely. Let $n$ be the sample size we observed, and $\hat{p}_{i,j}$ be the empirical mean of the $(i,j)$th entry. The estimation of rank is given by the following optimization problem.
\begin{align*}
    &\min_X~\text{rank}(X)\\
    &s.t.~\forall i,j,~|X_{i,j}-\hat{p}_{i,j}|\le\frac{\log n}{\sqrt{n}}.
\end{align*}
We show that the estimation given by the rule above converges to $\text{rank}(\mathbb{E}[\textbf{X}])$ w.p. $1$. Note that, by the selection of the threshold of the constraints at $\log n/\sqrt{n}$ and using the Chernoff bound, the probability that the matrix $\mathbb{E}[\textbf{X}]$ is not in the feasible set of the optimization above is at most $O\left(\frac{1}{n^2}\right)$. By the Borel-Cantelli lemma, $\mathbb{E}[\textbf{X}]$ will be in the feasible set eventually almost surely. Therefore, the rule will never over estimate the rank in the limit. 

Denote $k=\text{rank}(\mathbb{E}[\textbf{X}])$. There exists a full rank $k\times k$ submatrix $\textbf{Y}$ of $\mathbb{E}[\textbf{X}]$. By the continuity of the determinant and since $\text{det}(\textbf{Y})\not=0$, there is a neighborhood $\mathcal{N}$ of $\textbf{Y}$ such that any $\textbf{T}\in \mathcal{N}$ has rank $k$. Since $\log n/\sqrt{n}\rightarrow 0$, there is some constant $N$ depending on the neighborhood $\mathcal{N}$ such that for all $n>N$, the feasible set of the optimization restricted to coordinates of $\textbf{Y}$ will be a subset of $\mathcal{N}$. 
Combining this with the observation that $\mathbb{E}[\textbf{X}]$ can be out of the feasible set only finitely many times, we conclude that the rule will eventually output $k$ almost surely.

\begin{theorem}
To determine if $\mathbb{E}[\X]$ has eigenvalues of multiplicity more than $1$ is $\eas$-predictable.
\end{theorem}
\begin{proof}
For any matrix $A$, consider the characteristic polynomial $p_{A}$ of
$A$. We know that the coefficients of $p_A$ are polynomials of the
entries of $A$. We now only need to check if $\text{GCD}(p_A,p'_A)=1$,
where $p'_A$ is the derivative of $p_A$. Note that this can be done by
checking the resultant of $p_A,p'_A$ is zero. Since resultant is
continuous functions of the coefficients, the theorem follows using
Theorem~\ref{close2eas}.
\end{proof}

While one should expect most properties of matrices to be
$\eas$-predictable, we have the following open problem.
\begin{problem}[Open Problem]
\label{open2}
Is determining whether a matrix is diagonalizable $\eas$-predictable?
\end{problem}

\section{Conclusion}
In this paper, we introduced a general framework for the infinite horizon prediction problems that require to be correct eventually almost surely. We provide general characterization for the existence of the $\eas$-prediction rules. Our characterization has a natural interpretation using regularization that relates $\eas$-predictability with the decomposition of a class into uniformly predictable subclass. While our characterization does not completely capture the $\eas$-predictability in the most general case, it allows us to recover prior results in the literature with simple and elementary proofs, and resolves conjectures posed therein. We also provide tight characterisations for several special cases. There are also several open problems that we have mentioned through out this paper which concerns the exact characterization for certain cases.






\begin{appendix}

\section{More on hypothesis testing}
\label{alternat}
In this section, we present more results on the almost surely hypothesis testing framework (see Section~\ref{hypotest}) using our general machinery.
\subsection{Hypothesis testing with discrete domain}
As we have demonstrated in Section~\ref{hypotest}, the characterization of hypothesis testing for continuous distributions is involved, e.g., we are unaware if $F_{\sigma}$-separability under $KS$-distance is necessarily to achieve $\eas$-predictability even assumed absolute continuity. However, we have the following full characterization for discrete distributions. 
\begin{theorem}
\label{discretetest}
Let $\mathcal{H}_1,\mathcal{H}_2$ be collections of distributions over $\mathbb{N}$, then $(\mathcal{H}_1\cup \mathcal{H}_2,\ellht)$ is $\eas$-predictable if and only if $\mathcal{H}_1,\mathcal{H}_2$ are $F_{\sigma}$-separable under total variation distance.
\end{theorem}
We first prove a general lemma.
\begin{lemma}
\label{eas2tvdist}
Let $\mathcal{H}_1,\mathcal{H}_2$ be classes of distributions over $\mathbb{R}^d$. If $(\mathcal{H}_1\cup\mathcal{H}_2,\ellht)$ is $\eta$-predictable for some $\eta\le \frac{1}{4}$, then
$$\inf\{||p_1-p_2||_{TV}:p_1\in \mathcal{H}_1,~p_2\in \mathcal{H}_2\}>0.$$
\end{lemma}
\begin{proof}
Let $i$ be the sample complexity for $(\mathcal{H}_1\cup\mathcal{H}_2,\ellht)$ to achieve $\eta$-predictability, i.e., there exists a predictor such that the probability of making errors after step $i$ is upper bounded by $\eta$ for all $p\in \mathcal{H}_1\cup\mathcal{H}_2$. Suppose the lemma does not hold. We can find $p_1\in \mathcal{H}_1$ and $p_2\in \mathcal{H}_2$, such that $$||p_1-p_2||_{TV}\le \frac{1}{4i}.$$
We have, by Lemma~\ref{nfoldtv}, $||p_n^i-p^i||_{TV}\le \frac{1}{4}$. However, this implies that any predictor will make an error at step $i+1$ w.p. $\ge \frac{3}{8}>\frac{1}{4}$ by Lemma~\ref{lecam}. This will violate the $\eta$-predictability of $(\mathcal{H}_1\cup\mathcal{H}_2,\ellht)$.
\end{proof}
\begin{proof}[Proof of Theorem~\ref{discretetest}]
We first prove the necessity. By Theorem~\ref{main1b}, there exists a nesting $\{\mathcal{G}_i,i\ge 1\}$ for $\mathcal{H}_1\cup\mathcal{H}_2$ such that $(\mathcal{G}_i,\ellht)$ is $\frac{1}{4}$-predictable for all $i\ge 1$. The necessity is followed now by Lemma~\ref{eas2tvdist}.

To prove the sufficiency, by $F_{\sigma}$-separability, there exist nestings $\{A_i,i\ge 1\}$ and $\{B_i,i\ge 1\}$ for $\mathcal{H}_1$ and $\mathcal{H}_2$ respectively, such that
$$\forall i\ge 1,~\inf\{||p_1-p_2||_{TV}:p_1\in A_i,~p_2\in B_i\}\ge \frac{1}{i}.$$ We now show that $(A_i\cup B_i,\ellht)$ is $\eas$-learnable (see Definition~\ref{easl}). The theorem will now follow by Theorem~\ref{lean2predict}. 

For any $\eta>0$, the stopping rule $\tau_{\eta}$ goes as follows. We partition 
$\tau_{\eta}$ into two phases, at phase one, $\tau_{\eta}$ tries to determine if the missing probability mass of the underlying distribution is less than $\frac{1}{4i}$ with confidence at least $1-\frac{\eta}{2}$. This can be done by requesting additional samples to test if new symbols appear in the new coming samples. At phase two, we will estimate the probability mass only on the symbols we have seen so far at the end of phase one, so that the $L_1$ error of the estimation is upper bounded by $\frac{1}{4i}$ with confidence $\ge 1-\frac{\eta}{2}$. This can be done since the symbols we have seen are finite. Now, the prediction at the end of phase two is as follows, if the total variation distance of the estimated distribution is closer to $A_i$ we predict $p\in \mathcal{H}_1$, otherwise, we predict $p\in \mathcal{H}_2$. We will keep the prediction for all steps after phase two of $\tau_{\eta}$. It is easy to verify that such a rule satisfies the requirements of $\eas$-learnability and $\tau_{\eta}$ stops finitely almost surely on all distributions over $\mathbb{N}$.
\end{proof}

\begin{remark}
Note that, the proof of Theorem~\ref{discretetest} above does not work for distributions over $\mathbb{R}$. Since for distribution classes $A,B$ over $\mathbb{R}$, if
$$\inf\{||p_1-p_2||_{TV}:p_1\in A,~p_2\in B\}>0,$$
we cannot conclude that $(A\cup B,\ellht)$ is $\eas$-learnable. See construction in Remark~\ref{remk4}.

Note that, this does not imply that Theorem~\ref{discretetest} does not hold for distributions over $\mathbb{R}$. We leave it as an open problem to determine whether Theorem~\ref{discretetest} holds for arbitrary distributions over $\mathbb{R}$.
\end{remark}

\subsection{Testing properties of first moment}

For any two sets $A,B\subset \mathbb{R}^d$, we would like to determine whether the underlying distribution has first moment in $A$ or $B$ with the premise that the first moment is in $A\cup B$. The following theorem is due to Dembo and Peres~\cite{dembo1994topological}.

\begin{theorem}[{\cite[Theorem 1]{dembo1994topological}}]
\label{dembothm1}
Let $A,B\subset \mathbb{R}^d$ be disjoint sets. Denote $\mathcal{H}_1^r$ and $\mathcal{H}_2^r$ to be classes of all distributions over $\mathbb{R}^d$ with first moments in $A$ and $B$ respectively that have finite $r$th moment. We have:
\begin{itemize}
\item[1.]If $r>1$, then $(\mathcal{H}_1^r\cup \mathcal{H}_2^r, \ellht)$ is $\eas$-predictable iff $A$ and $B$ are $F_{\sigma}$-separable under $L_2$ distance.
\item[2.]If $r=1$, then $(\mathcal{H}_1^r\cup \mathcal{H}_2^r, \ellht)$ is $\eas$-predictable iff $A$ and $B$ are contained in disjoint \emph{open} sets of $\mathbb{R}^d$.
\end{itemize}
\end{theorem}

We now provide an alternative proof of the theorem above by applying our general characterizations. We need the following lemma.
\begin{lemma}
\label{pmoment}
Let $X_1,\cdots X_n$ be $\iid$ random variables with $\mathbb{E}[|X_1|^r]=M<\infty$ where $r>1$ and $\mathbb{E}[X_1]=u$, $\bar{X}=\frac{X_1+\cdots+X_n}{n}$, then for any $\epsilon>0$ we have
$$\mathrm{Pr}\left[|\bar{X}-u|\ge \epsilon\right]\le\frac{C_{\epsilon,M}}{n^{r-1}},$$where $C_{\epsilon,M}$ is a constant that depends only on $\epsilon$ and $M$.
\end{lemma}
\begin{proof}
W.l.o.g., we assume $u=0$. By Bahr-Essen inequality \citep{von1965inequalities}, we have
$$\mathbb{E}[|\bar{X}|^r]\le 2\frac{\sum_{i=1}^n\mathbb{E}[|X_i|^r]}{n^r}= \frac{2Mn}{n^{r}}=\frac{2M}{n^{r-1}}.$$The lemma follows by a simple application of Markov inequality.
\end{proof}

\begin{proof}[Proof of Theorem~\ref{dembothm1}(1)]
 We first prove the sufficiency. Since $A,B$ are $F_{\sigma}$-separable, we have by Lemma~\ref{close2compact} that there exist nestings $\{A_i,i\ge 1\}$, $\{B_i,i\ge\}$ of $A,B$ respectively, such that
$$\forall i\ge 1, \inf\{||\x-\y||_2: \x\in A_i,\y\in B_i\}\ge \frac{1}{i}.$$
We now define $\mathcal{G}_i=\{p\in \mathcal{H}_1^r\cup \mathcal{H}_2^r: \mathbb{E}[|X_p|^{r}]\le i \text{ and } \mathbb{E}[X_p]\in A_i\cup B_i\}$, where $X_p$ is the random variable governed by distribution $p$. Clearly, we have $\mathcal{G}_i\subset \mathcal{G}_{i+1}$ for all $i\ge 1$. We now show that $\mathcal{H}_1^r\cup \mathcal{H}_2^r=\bigcup_{i\ge 1}\mathcal{G}_i$, thus proving that $\{\mathcal{G}_i,i\ge 1\}$ forms a nesting of $\mathcal{H}_1^r\cup \mathcal{H}_2^r$. To see this, let $p\in \mathcal{H}_1^r\cup \mathcal{H}_2^r$ be an arbitrary distribution with $\mathbb{E}[X_p]\in A$ (respectively $\in B$), we have $\mathbb{E}[X_p]\in A_i$ (respectively $\in B_i$) for some $i$. Since $X_p$ also has finite $r$th moment, we have $\mathbb{E}[|X_p|^r]\le j$ for some $j$. Therefore, we have $p\in \mathcal{G}_{\max\{i,j\}}$ by nesting of $A_i$ (respectively $B_i$).

By Theorem~\ref{main1b}, we only need to show that $(\mathcal{G}_i,\ellht)$ is $\eta$-predictable for all $i\ge 1$ and $\eta>0$. By Lemma~\ref{pmoment}, we know that by letting sample size $n$ large enough one can make 
$$p(|\bar{X}_p-\mathbb{E}[X_p]|\ge \frac{1}{2i})\le \eta$$
for all distributions $p\in \mathcal{G}_i$. To achieve the $\eta$-predictability, we will predict at step $n$ that the first moment is in $A_i$ (respectively $B_i$) if $\bar{X}_p$ is closer to $A_i$ (respectively $B_i$), and \emph{retain} the same prediction for all time step $\ge n$.

We now prove the necessity. Let $\mathcal{G}$ be the class of all Gaussian distributions with mean vectors in $A\cup B$ and covariance matrix $I$ (i.e., the identity matrix). We have $(\mathcal{G},\ellht)$ is $\eas$-predictable, since $\mathcal{G}$ is a subset of $\mathcal{H}_1^r\cup \mathcal{H}_2^r$. Now, by Theorem~\ref{main1b}, there exists a nesting $\{\mathcal{G}_i,i\ge 1\}$ of $\mathcal{G}$ such that for all $i\ge 1$, $(\mathcal{G}_i,\ellht)$ is $\frac{1}{4}$-predictable. Define $A_i=\{\x\in A:\exists p\in \mathcal{G}_i~s.t.~\mathbb{E}[X_p]=\x\}$ (and $B_i$ similarly). Clearly, $\{A_i,i\ge 1\}$ forms a nesting of $A$, and $\{B_i,i\ge 1\}$ forms a nesting of $B$. We show that there is no limit point of $A_i$ (respectively $B_i$) in $B$ (respectively $A$). This will complete the proof by Lemma~\ref{close2compact}.

Suppose we have $\x_1,\x_2,\cdots\in A_i$ and $\x\in B_i$ such that $||\x_n-\x||_2\rightarrow 0$ as $n\rightarrow \infty$. Denote $p_n$ to be the Gaussian distribution with mean $\x_n$ and $p$ be the Gaussian distribution with mean $\x$. We have $||p_n-p||_{TV}\rightarrow 0$ as $n\rightarrow\infty$. By Lemma~\ref{eas2tvdist}, this will violate the $\frac{1}{4}$-predictability of $\mathcal{G}_i$.
\end{proof}

\section{Technical lemmas}
\label{techlem}
In this section we collect some technical lemmas that are used in this paper. The following lemma is due to Le Cam~\cite{lecam1973convergence}.
\begin{lemma}[Le Cam's Two Point Method]
\label{lecam}
Let $p_1,p_2$ be two distributions over the same probability space $\mathcal{X}$. Then for any estimator $\Phi:\mathcal{X}\rightarrow \{1,2\}$ we have
$$\max_{i\in \{1,2\}}\mathrm{Pr}_{X\sim p_i}(\Phi(X)\not=i)\ge \frac{1-||p_1-p_2||_{TV}}{2}.$$
\end{lemma}

The following lemma bounds the total variation of $n$-fold $\iid$ distributions from the marginal distributions, which is "folklore" in the literature.
\begin{lemma}
\label{nfoldtv}
Let $p_1,p_2$ be distributions over the same Polish space with Borel $\sigma$-algebra, and $p_1^n, p_2^n$ be the $n$-fold $\iid$ distributions of $p_1,p_2$ respectively. Then
$$||p^n_1-p^n_2||_{TV}\le 1-(1-||p_1-p_2||_{TV})^n.$$
In particular, we have $||p_1^n-p_2^n||_{TV}\le n||p_1-p_2||_{TV}$.
\end{lemma}
\begin{proof}
Consider a perfect coupling $(X_1,X_2)$ of $(p_1,p_2)$. We have
$$\mathrm{Pr}[X_1\not=X_2]=||p_1-p_2||_{TV}.$$
Let $(X^n_1,X^n_2)$ be the $n$-fold $\iid$ copy of $(X_1,X_2)$. We have
\begin{align*}
    \mathrm{Pr}[X_1^n=X_2^n]&=\mathrm{Pr}[X_1=X_2]^n,~\text{by independence}\\
    &=(1-||p_1-p_2||_{TV})^n.
\end{align*}
Now, we have
\begin{align*}
    ||p_1^n-p_2^n||_{TV}&\le \mathrm{Pr}[X_1^n\not=X_2^n],~\text{by coupling lemma}\\
    &=1-\mathrm{Pr}[X_1^n=X_2^n]\\
    &=1-(1-||p_1-p_2||_{TV})^n.
\end{align*}
The last part follows since $(1-x)^n\ge 1-nx$ for all $x\in [0,1]$ and $n\ge 1$.
\end{proof}

The following example demonstrates that $\eas$-predictability does not imply the existence of universal nesting.
\begin{example}
\label{easnot2universal}
  Let $\mathcal{P}$ be the class of all random processes over $\{0,1\}^{\infty}$ such that for any $p\in \mathcal{P}$ there exists a parameter $b\in \{0,1\}$ and strictly monotonically increasing integer sequence $M_1,M_2,\cdots$ with $M_1=1$ satisfying for all $j\ge 1$, $X_{M_j}=X_{M_j+2}=\cdots=X_{M_{j+1}-1}$, and $X_{M_j}^{M_{j+1}-1}$ is \emph{independent} of all \emph{other} random variables in the process, and
$$p(X_{M_j}=b)=1-\frac{1}{(j+1)^2}.$$Note that a process in $\mathcal{P}$ is purely determined by the parameter $b$ and sequence $\{M_j\}_{j\ge 1}$. We will denote a process to be $p_b$ if the parameter is $b$. Clearly, by Borel-Cantelli lemma $\mathcal{P}$ is $e.a.s.$-predictable under the loss $\ell(p_b,X_1^n,Y_n)=1\{Y_n\not=b\}$ by predicting $X_{n-1}$ at each step $n$. We now show that there is no nesting $\{\mathcal{P}_i,i\ge 1\}$ of $\mathcal{P}$ such that $(\mathcal{P}_i,\ell)$ is $\eta$-predictable for all $\eta>0$ and $i\ge 1$. Our approach is a proof by contradiction. Let $\{\mathcal{P}_i,i\ge 1\}$ be a \emph{universal nesting} of $\mathcal{P}$ w.r.t. loss $\ell$, i.e., $(\mathcal{P}_i,\ell)$ is $\eta$-predictable for all $\eta>0$ and $i\ge 1$. We construct a distribution in $\mathcal{P}$ that is not in $\bigcup_{i\ge 1}\mathcal{P}_i$, a contradiction on our supposition that $\cP=\bigcup_{i\ge 1} \mathcal{P}_i$.

For $j\ge 1$, let $R_j$ be a number such that the class $\mathcal{P}_j$ is
$\eta_j$-predictable with a sample of size $R_j$, where $\eta_j$ will
be specified later. W.l.o.g., we may assume $R_j$ is strictly
increasing on $j$. Let $p_0,p_1$ be two distributions in $\mathcal{P}$
that are associated with the sequence $M_1=1$ and $(M_j=R_{j-1}+1)_{j\ge 2}$
with parameter $b=0$ and $b=1$ respectively, i.e. $p_0,p_1$ share the
same partition of independent blocks but with different parameter for $b$.

Let $||p_0^{R_j}-p_1^{R_j}||_{TV}=1-\epsilon_j$, where $p_b^{R_j}$ with $b\in \{0,1\}$ is
the distribution of $p_b$ on the initial length-$R_j$ binary strings. Observe that
the probability of any length-$R_j$ binary string under either $p_0$
or $p_1$ is purely a function of the number $j$ of blocks which are
all-$0$ (or equivalently all-1), and therefore, so does $\epsilon_j$, the
total variation distance. Specifically, $\epsilon_j$ depends only on $j$, not the sequence $\{R_j\}$, or $\{\eta_j\}$, and in particular we can choose
$\eta_j<\frac{\epsilon_j}{2}$.

By Lemma~\ref{lecam}, we know that any prediction rule will make an error
with probability at least $\frac{\epsilon_j}{2}$ on either $p_0$ or
$p_1$ at time step $R_j+1$ if they both belong to
$\mathcal{P}_j$. Since $\epsilon_j/2 > \eta_j$, we conclude that for all $j\ge 1$ at
least one of $p_0$ or $p_1$ cannot be in $\mathcal{P}_j$.

But $\mathcal{P}_{j}\subset \mathcal{P}_{k}$ for all $k\ge j$. Let $t$
be the smallest number that contains one of $p_0$ or $p_1$.  $\cP_t$
cannot contain both, per the argument above, it follows that $\cP_k$
contains that distribution for all $k\ge t$.  However, the argument
above implies that the distribution missing from $\cP_t$ cannot be in
$\union_{k\ge 1} \cP_k$, contradicting the assumption that
$\{\mathcal{P}_i,i\ge 1\}$ is a nesting of $\mathcal{P}$.
\end{example}

The following lemmas are used in Example~\ref{eg:cover}. Let $r_1,r_2,\cdots$ be an arbitrary enumeration of rational numbers in $[0,1]$.
\begin{lemma}
\label{coverl1}
Let $\mathcal{B}_k$ be the set of all Bernoulli processes
with parameters in $$\mathcal{S}_k=\{r_1,\cdots,r_k\}
\cup
\Paren{
[0,1]
\backslash
\bigcup_{i=1}^{\infty}B(r_i,\frac{1}{k2^i})}$$where $B(r_i,\frac{1}{k2^i})$ is the open balls centered at $r_i$ with
radius $\frac{1}{k2^i}$, $r_1,r_2,\cdots$ is an arbitrary enumeration of rational numbers in $[0,1]$. Then $\mathcal{B}_k$ is $\eta$-predictable with irrationality loss for any $k\ge 1$ and $\eta>0$.
\end{lemma}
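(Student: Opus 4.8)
The plan is to show that $\mathcal{B}_k$ is $\eta$-predictable by exhibiting an explicit predictor with a finite sample size. The crucial structural fact about $\mathcal{S}_k$ is a uniform separation gap: the set $\mathcal{S}_k$ contains exactly the $k$ rational points $r_1,\dots,r_k$, and every other point of $\mathcal{S}_k$ lies outside each ball $B(r_i,\tfrac{1}{k2^i})$. In particular, around each included rational $r_i$ (with $i\le k$), the ball of radius $\tfrac{1}{k2^i}$ meets $\mathcal{S}_k$ only at $r_i$ itself; hence the distance from any $r_i$ ($i\le k$) to the irrational part of $\mathcal{S}_k$ is at least $\delta_k \ed \min_{1\le i\le k}\tfrac{1}{k2^i} = \tfrac{1}{k2^k} > 0$. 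So the rational points of $\mathcal{S}_k$ are $\delta_k$-isolated from the irrational points of $\mathcal{S}_k$.

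With this gap in hand, the predictor is the natural one. Fix $\eta>0$ and choose a sample size $m = m(k,\eta)$ large enough that, by Hoeffding's inequality, the empirical mean $\bar X_m$ of $m$ i.i.d.\ Bernoulli$(p)$ samples satisfies $p(|\bar X_m - p| \ge \delta_k/2) \le \eta$ uniformly over all $p\in[0,1]$ (this needs only $m \ge \tfrac{2}{\delta_k^2}\log\tfrac{2}{\eta}$, and the bound is uniform because Hoeffding does not depend on $p$). The predictor $\Phi$ ignores the first $m-1$ samples, and from step $m$ onward outputs ``rational'' if $\bar X_m$ is within $\delta_k/2$ of one of $r_1,\dots,r_k$, and ``irrational'' otherwise; it then freezes this decision forever. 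I would then argue correctness: if the true parameter is some $r_i$ with $i\le k$, then on the event $|\bar X_m - r_i| < \delta_k/2$ the empirical mean is within $\delta_k/2$ of $r_i$ and (by the gap $\delta_k$) at distance $>\delta_k/2$ from every other point of $\mathcal{S}_k$, so $\Phi$ correctly says ``rational''; if the true parameter $p$ is an irrational point of $\mathcal{S}_k$, then every $r_i$ ($i\le k$) is at distance $\ge \delta_k$ from $p$, so on the event $|\bar X_m - p|<\delta_k/2$ we have $|\bar X_m - r_i| > \delta_k/2$ for all $i\le k$, and $\Phi$ correctly says ``irrational''. In either case the ``good'' event has probability $\ge 1-\eta$, and on it $\Phi$ makes no error at step $m$ or later, which is exactly $\eta$-predictability with sample size $m$.

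The only mild subtlety — and the main thing to get right — is the uniformity of the concentration bound over the infinitely many sources in $\mathcal{B}_k$; this is handled cleanly because Hoeffding's inequality for bounded random variables gives a tail bound depending only on $m$ and $\delta_k$, not on the parameter $p$, so a single $m$ works for all of $\mathcal{B}_k$ simultaneously. Everything else is a routine distance computation using the explicit radii $\tfrac{1}{k2^i}$. I would present the argument in the order: (1) extract the gap $\delta_k = \tfrac{1}{k2^k}$ from the definition of $\mathcal{S}_k$; (2) define $\Phi$ and pick $m$ via Hoeffding; (3) verify correctness in the rational and irrational cases; (4) conclude via Definition~\ref{etap}.
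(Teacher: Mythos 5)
Your proposal is correct and follows essentially the same route as the paper: extract the uniform gap $\tfrac{1}{k2^k}$ between the $k$ retained rationals and the remaining (irrational) points of $\mathcal{S}_k$, apply a parameter-free concentration bound (you use Hoeffding where the paper uses Chebyshev, an immaterial difference), and freeze a proximity-based decision after a fixed sample size. One cosmetic remark: in the rational case your parenthetical claim that $\bar X_m$ is then at distance $>\delta_k/2$ from \emph{every} other point of $\mathcal{S}_k$ can fail if two of $r_1,\dots,r_k$ happen to be close to each other, but this is harmless since your rule only tests proximity to the set $\{r_1,\dots,r_k\}$ and still outputs ``rational'' correctly.
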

\begin{proof}
We show that for any $k\in\mathbb{N}$ and $\eta>0$, there
exists $b_{\eta}$ such that $\mathcal{B}_k$ is $\eta$-predictable with sample size $b_{\eta}$. Let $X_1,\cdots,X_n$ be an \iid sample from some
$p\in\mathcal{B}_k$ with $\mathbb{E}[X_i]=\mu$ and
$\bar{X}=\frac{X_1+\cdots+X_n}{n}$. We have $\mathrm{Var}[X_i]\le
1$. Chebyshev's inequality then shows that
$$p\left(|\bar{X}-\mu|\ge \epsilon\right)\le \frac{1}{n\epsilon^2}.$$
Fix $\epsilon=\frac{1}{k2^{k+1}}$. Let $b_\eta$ be a number large enough so 
that $\frac1{b_{\eta}\epsilon^2} < \eta$. Therefore for $n>b_\eta$,
$p\left(|\bar{X}-\mu|\ge \epsilon\right)$ is less than $\eta$. Thus, we can conclude that
$\mathcal{B}_k$ is $\eta$-predictable by simply predicting the
irrationality of element in $\mathcal{S}_k$ that is closest to
$\bar{X}$ at step $b_{\eta}$, retaining the prediction perpetually thereafter.
\end{proof}

\begin{lemma}
\label{coverl2}
Let $\mathcal{B}$ be a class of Bernoulli processes with parameters in $\mathcal{S}$, if $\mathcal{B}$ is $\eta$-predictable w.r.t. the irrationality loss for some $0<\eta<\frac{1}{2}$, then
\begin{equation}
\label{lm2eq1}
\inf\{|x-r|:x,r\in \mathcal{S}\text{ and }r\in \mathbb{Q},~x\in [0,1]\backslash \mathbb{Q}\}>0.
\end{equation}

\end{lemma}
\begin{proof}
By definition of $\eta$-predictability, there exists a number $N_\eta$
and prediction rule $\Phi_{\eta}$ such that $\Phi_{\eta}$ makes no
errors after step $N_{\eta}$ with probability at least $1-\eta$ for
all $p\in \mathcal{B}$. Suppose, otherwise, that the infimum in
equation~(\ref{lm2eq1}) is $0$. We now select two sources $p_0,p_1$
from $\mathcal{B}$ with parameters $b_0,b_1$ respectively, such that
$b_0$ is rational and $b_1$ is irrational and $|b_0-b_1|<
\frac{1-2\eta}{2N_{\eta}}$.

We now have $||p_0^{N_{\eta}}-p_1^{N_{\eta}}||_{TV}< 1-2\eta$, where
$p_i^{N_{\eta}}$ is the distribution of $p_i$ restricted to the first
$N_{\eta}$ samples---using the fact that $||p^N-q^N||_{TV}\le
N||p-q||_{TV}$ for any distributions $p,q$ with $N$-fold $i.i.d.$
distributions $p^N,q^N$. Now, by Lemma~\ref{lecam}, any prediction
rule (in particular $\Phi_{\eta}$) will make an error at time step
$N_{\eta}+1$ with probability $>\eta$ for either $p_0$ or $p_1$. This
contradicts $\eta$ predictability of $\Phi_{\eta}$ on
$\mathcal{B}$.
\end{proof}

\begin{lemma}
\label{coverl3}
Let $\mathcal{S}_1\subset\mathcal{S}_2\subset\cdots\subset\mathcal{S}_k\subset\cdots\subset [0,1]$ be countably many sets, such that
\begin{equation}
  \label{eq:assumption}
  \forall k,~\inf\{|x-r|:x,r\in \mathcal{S}_k\text{ and }r\in \mathbb{Q},~x\in [0,1]\backslash \mathbb{Q}\}>0.
\end{equation}
If $\bigcup_{k\in \mathbb{N}}\mathcal{S}_k$ contains all rational numbers in $[0,1]$, then the irrational numbers in $\mathcal{S}_k$ are nowhere dense in $[0,1]$ for all $k$.
\end{lemma}
\begin{proof}
Suppose otherwise, the set of irrational numbers $\mathcal{I}_k$ in
$\mathcal{S}_k$ is not nowhere dense. By definition, there exists an
interval $[a,b]\subset \text{col}(\mathcal{I}_k)$, where $\text{col}$
denotes for closure. Since the rational numbers in $[0,1]$ are dense,
there exists some rational number $r\in [a,b]$, and therefore $r\in
\text{col}(\mathcal{I}_k)$. Since $r\in\bigcup_{k\in
  \mathbb{N}^+}\mathcal{S}_k$, there exist some $k'\ge k$ such that
$r\in \mathcal{S}_{k'}$. However, we also have
$\mathcal{S}_{k}\subset\mathcal{S}_{k'}$. Which implies that $r$ is
the limit point of irrational numbers in $\mathcal{S}_{k'}$,
contradicting the assumption~\eqref{eq:assumption} .
\end{proof}

\section{Omitted Proofs}
In this section we collection some of the omitted proofs in the main paper.

\subsection{Proofs in Section 5.2}
\label{sec:c1}
\begin{proof}[Proof of Theorem~\ref{open2idtf}]
  Suppose there is a limit point $p$ of $\mathcal{U}\backslash \mathcal{V}$ in $\mathcal{V}$, we show that $\mathcal{V}$ is not identifiable in $\mathcal{U}$. Let $p_1,p_2,\cdots,p_n,\cdots\in \mathcal{U}\backslash \mathcal{V}$ such that $||p_n-p||_{TV}\rightarrow 0$ as $n\rightarrow\infty$, where we have abused the notation $p,p_n$ to denote the marginal distributions of processes $p,p_n$ as well. For any $\eta\le \frac{1}{4}$, we denote $\tau_{\eta}$ as the stopping rule that identifies $\mathcal{V}$ in $\mathcal{U}$. Let $A_m$ be the event that $\tau_{\eta}$ stops before step $m$. We have $p(A_m)\rightarrow 1$ as $m\rightarrow \infty$. Taking $M$ be a number such that $p(A_M)\ge \frac{3}{4}$. Now, since $||p_n-p||_{TV}\rightarrow 0$, we have some $p_N$ such that $||p_N-p||_{TV}\le\frac{1}{4M}$. Therefore, we have $|p(A_M)-p_N(A_M)|\le \frac{1}{4}$, where we have used the inequality $||p^M-p_N^M||_{TV}\le M||p-p_N||_{TV}$ if $p^M,P_N^M$ are the $M$-fold $i.i.d.$ distributions of $p,p_N$ respectively. However, this would imply that $\tau_{\eta}$ stops on $p_N$ finitely with probability $\ge \frac{3}{4}-\frac{1}{4}\ge\frac{1}{4}$, a contradiction.

Suppose now there are no limit points of $\mathcal{U}\backslash\mathcal{V}$ in $\mathcal{V}$. We know that for any point $p\in \mathcal{V}$, there exists a open ball $\mathcal{B}_p$ of $p$ such that $t_p\overset{\Delta}{=}\inf\{||p-q||_{1}:p\in \mathcal{B}_p\text{ and }q\in\mathcal{U}\backslash\mathcal{V}\}>0$. Since the topological space of distributions over $\mathbb{N}$ is separable, there exist countably many distributions $p_1,p_2,\cdots\in \mathcal{V}$ such that $\mathcal{V}=\bigcup_{i\in \mathbb{N}}\mathcal{B}_{p_i}$. Let $\mathcal{V}_n=\bigcup_{i=1}^n\mathcal{B}_{p_i}$, we show that $\mathcal{V}_n$ can be distinguished with $\mathcal{U}\backslash\mathcal{V}$ with arbitrary high confidence by observing bounded samples. We denote $r_i$ to be the radius of ball $\mathcal{B}_{p_i}$ and $t_i=t_{p_i}$. Denote $n_i=\min\{n:p_i(X\ge n)\le t_i/4\}$. Let $q$ be the underlying distribution, we can estimate the empirical frequency $\hat{q}$ of $q$ on $\{1,2,\cdots,n_i\}$ with error at most $t_i/4$ and arbitrary high confidence using bounded samples. The key observation is that, for any $q\in \mathcal{U}\backslash\mathcal{V}$ we have $||q-p_i||_1\ge r_i+t_i$. Meaning that we can distinguish distributions from $\mathcal{B}_{p_i}$ and $\mathcal{U}\backslash\mathcal{V}$ by checking whether $||\hat{q}-p_i||_1\le r_i+t_i/2$ with arbitrary high confidence using bounded samples. A similar argument as in Example~\ref{identexample} completes the proof.
\end{proof}

\begin{proof}[Proof of Theorem~\ref{main2b}]
  We first show that the stated conditions on a collection $\cP$
  and loss $\ell$ are sufficient to guarantee that $\cP$ is
  $\eas$-learnable. Namely, for all $\eta>0$, we will find an estimator
  $\Phi$ and a stopping rule $\tau$ such that for all $p\in\cP$, the
  probability $\Phi$ incurs non-zero loss after $\tau$ stops is $\le \eta$.

  Now the conditions stated imply that for all $\eta>0$, there is an
  identifiable nesting $\mathcal{P}=\bigcup_{n\in
    \mathbb{N}}\mathcal{P}_{n}$ and a sequence of numbers $\sets{ m_n:
    n\ge 1}$ such that each $\mathcal{P}_{n}$ is
  $\frac{\eta}{2}$-predictable with sample size $m_n$.  Since $\cP_n$
  is identifiable, there is a stopping rule $\sigma_n$ that stops
  after a finite time on $\mathcal{P}\backslash \mathcal{P}_n$ with
  probability at most $\eta/2^{n+1}$ and stops finitely almost surely
  on $\mathcal{P}_n$.
  
   We will assume without loss of
  generality that $\sigma_n$ only stops on sequences of length $\ge m_n$.
  
  The stopping rule $\tau$ for $(\cP,\ell)$ stops if for some $n$,
  $\sigma_n$ has stopped.
  Let $N$ be the index of the stopping rule that stops earliest. The prediction for $(\cP,\ell)$
  is now the $\eta/2-$predictor for $\cP_N$, which we call $\Phi_N$.

  For all $n$, define
    \[
      A_n = \sets{X_1^{\infty}: \sigma_n \text{ stops on }X_1^{n}}.
    \]
  We claim that:
  \begin{enumerate}
  \item The stopping rule $\tau$ stops with probability 1. This is because $p\in \mathcal{P}_k$
  for some $k$, we have $\sigma_k$ stops with probability $1$.
  
    \item The probability that $\tau$ stops but $\Phi$ incurs non-zero
      loss is $\le \eta$. The probability that $\tau$ stops but
      $p\notin \cP_N$ is $\le \sum_{i=1}^{\infty} p(A_i) \le \eta/2$.
      Finally, since $\cP_N$ is $\eta/2-$predictable with sample size
      $m_N$, the probability that $\Phi_N$ predicts incorrectly after
      sample size $m_N$ (which we are guaranteed is the case since we
      assumed $\sigma_N$ only stops on sequences with length$\ge m_N$)
      is $\le \eta/2$. The claim follows by union bound.
    \end{enumerate}

Now, suppose $(\cP,\ell)$ is $\eas$-learnable, and $\mathcal{P}$ are $\iid$ measures on $\mathcal{X}^{\infty}$.
For any $\eta>0$, consider the stopping rule $\tau_{\eta/2}$ and
the estimate $\Phi_{\eta/2}$ that $\eas$-learns $(\cP,\ell)$. Let
\[
  T_n = \sets{ X_1^\infty : \tau_{\eta/2}(X_1^n)=1}
\]
be the set of sequences on which $\tau$ has stopped at or before step
$n$. Now for all
$n$, let
\[
  \cP_n = \sets{ p\in \cP : p(T_n) > 1-\eta/2 }.
\]
Clearly, we have $\cP_n\subset \cP_{n+1}$ and that
$\union_{n\ge 1} \cP_n = \cP$.  For all $n$, and for all $p\in\cP_n$,
$\Phi_{\eta/2}$ incurs non-zero loss on samples of length $n$ with
probability $\le \eta/2$ by construction, namely, $\cP_n$ is
$\eta$-predictable by the union bound.

We will now show that $\cP_n$ are identifiable in $\cP$ to wrap up the theorem.

For any assigned confidence $\delta$, we construct a stopping rule
$\tau_{\delta}$ that stops after a finite time with probability $1$
when the underlying process is from $\mathcal{P}_n$ and with
probability at most $\delta$ on processes from
$\mathcal{P}\backslash\mathcal{P}_n$. To do so, we choose an arbitrary
sequence $\{e_m\}$ with $e_m\rightarrow 0$ as
$m\rightarrow\infty$. The stopping rule is partitioned into phases. At
phase $m$ we estimate $p(T_n)$ with confidence $\ge 1-\frac{\delta}{2^m}$
and error $\le e_m/4$, by considering independent sample blocks
of length $n$. If the estimate is larger than $1-\eta/2+e_m$ we stop,
otherwise we continue to phase $m+1$. Now, if we have
$p(T_n)>1-\eta/2$, then there exist some number $M$ such that
$p(T_n)>1-\eta/2+2e_m$ for all $m\ge M$. Therefore, for all $m\ge M$,
with probability at most $\delta/2^{m}$ we will not stop at phase
$m$. By Borel-Cantelli lemma, with probability $1$ we will stop in a
finite time. A similar argument yields that if $p(T_n)\le 1-\eta/2$,
then we stop with probability at most $\delta$.
\end{proof}

\subsection{Proofs in Section 7.1}
\label{sec:c2}
\begin{proof}[Proof of Lemma~\ref{close2compact}]
By definition of $F_{\sigma}$-separability, we have collections of \emph{closed} sets $\{A_i'\}$ and $\{B_i'\}$ such that $A\subset\bigcup_i A_i'$ and $B\subset\bigcup_i B_i'$. Since finite unions of closed sets are closed, we may assume $\{A_i'\}$, $\{B_i'\}$ to be nested. Define
$$A_i=\{x:x\in A_k'\text{ and }d(x,B_k')\ge 1/i,~k\in\mathbb{N}\}$$and
$$B_i=\{y:y\in B_k'\text{ and }d(y,A_k')\ge 1/i,~k\in\mathbb{N}\}.$$
Since $A_i',B_i'$ are closed, we have $A\subset\bigcup_i A_i$ and $B\subset\bigcup_i B_i$. For any $x\in A_i$ and $y\in B_i$, we show that $d(x,y)\ge 1/i$, thus proving the necessary condition. By definition, we have $x\in A_{k_1}'$, $d(x,B_{k_1}')\ge 1/i$ and $y\in B_{k_2}'$, $d(y,A_{k_2}')\ge 1/i$ for some $k_1,k_2\in \mathbb{N}$. W.l.o.g., we may assume $k_1\le k_2$. Since the collections are nested, we have $x\in A_{k_2}'$. Now, since $d(y,A_{k_2}')\ge 1/i$, we have $d(x,y)\ge 1/i$.

To prove the sufficiency, it is sufficient to show that
$$\bigcup_{i\ge 1}\bar{A}_i\cap \bigcup_{i\ge 1}\bar{B}_i=\emptyset,$$
where $\bar{S}$ denotes the closure of set $S$ under metric $d$. Otherwise, there will be some $x\in\bar{A}_i$ and $x\in \bar{B}_j$ for some $i,j$. w.l.o.g., we can assume $j\ge i$. By nesting property, we now have $x\in \bar{A}_j$ as well. However, this will imply that $d(A_j,B_j)=0$, a contradiction.
\end{proof}

\subsection{Proofs in Section 7.3}
\label{sec:c3}
\begin{proof}[Proof of Theorem~\ref{finiteonline}]
For any $h_1.h_2\in \mathcal{H}$, we define a (pseudo)metric $d(h_1,h_2)=\mathrm{Pr}_{X\sim \mu}[h_1(X)\not=h_2(X)]$. Let $\mathcal{H}'=\{h_1',h_2'\cdots\}$ be an enumeration of $\mathcal{H}'$. We define $\mathcal{H}_i=\{h\in \mathcal{H}:d(h,h_i')=0\}$. Clearly $\mathcal{G}_i=\bigcup_{j=1}^i\mathcal{H}_j$ forms a nesting of $\mathcal{H}$ and $(\mathcal{G}_i,\ellol)$ is $\eta$-predictable for all $\eta>0$, the sufficiency follows by Theorem~1.

By Theorem~\ref{main1b} and Lemma~\ref{decomeq}, we known that if $(\mathcal{H},\ellol)$ is $\eas$-predictable then for all $\eta>0$ we have nesting $\{\mathcal{H}_i, i\ge 1\}$ of $\mathcal{H}$ such that $(\mathcal{H}_{i},\ellol)$ is $\eta$-predictable with sample size $\le i$. Fix some $0<\eta<\frac{1}{4}$ and let $\{\mathcal{H}_i,i\ge 1\}$ be the corresponding decomposition. We show that $\mathcal{H}_{i}$ can be partitioned into countably collections such that any two functions within one collection differ by a measure zero set. This will complete the proof. 

We claim that there exists $\delta>0$ such that
$$\inf\{d(h_1,h_2):h_1,h_2\in \mathcal{H}_i\text{ and }d(h_1,h_2)>0\}\ge\delta.$$
Otherwise, there exist $h_1,h_2\in \mathcal{H}_i$ such that $0<d(h_1,h_2)<\delta_i$, where $\delta_i$ is choosing so that $(1-\delta_i)^i>2\eta$. Let $A\subset(\mathbb{R}^{d})^{\infty}$ be the event that $h_1,h_2$ can't be distinguished within $i$ samples. We have $\mathrm{Pr}[A]> 2\eta$. Fix some predictor $\Phi$, let $p_j=\mathrm{Pr}[\Phi\text{ makes error after step }i\text{ on }h_j]$ for $j\in\{1,2\}$. We show that $\max\{p_1,p_2\}>\eta$, which contradicts to the $\eta$-predictability of $\mathcal{H}_i$, thus establishes the claim. To do so, we use a probabilistic argument, let $\textbf{h}$ to be the random variable uniformly choosing from $\{h_1,h_2\}$. We only need to show
\begin{equation}
\label{eqprobarg}
\mathbb{E}_{\textbf{h}}\mathbb{E}_{\x\sim \mu^{\infty}}[1\{\Phi\text{ makes error after step }i\text{ on }\textbf{h}\}\mid A]\ge \frac{1}{2}
\end{equation}
Let $B\subset(\mathbb{R}^{d})^{\infty}$ be the event that there exist a sample $Y$ after step $i$ that first reveals $h_1(Y)\not=h_2(Y)$. We have $\mathrm{Pr}[B]=1$, since $d(h_1,h_2)>0$. Note that
\begin{equation}
\label{eqprobargin1}
\mathbb{E}_{\x\sim \mu^{\infty}}[\mathbb{E}_{\textbf{h}}1\{\Phi\text{ makes error after step }i\text{ on }\textbf{h}\}\mid A, B]\ge\frac{1}{2},
\end{equation}
since event $C=\{\Phi\text{ makes error at sample }Y\text{ on }\textbf{h}\mid A,B\}$ implies the above event and $\mathbb{E}_{\textbf{h}}[1\{C\}]=\frac{1}{2}$. We now have
\begin{equation}
\label{eqprobargin2}
\mathbb{E}_{\x\sim \mu^{\infty}}[\mathbb{E}_{\textbf{h}}1\{\Phi\text{ makes error after step }i\text{ on }\textbf{h}\}\mid A]\ge\frac{1}{2},
\end{equation}
since $\mathrm{Pr}[B]=1$. Note that (\ref{eqprobargin2}) implies (\ref{eqprobarg}) by exchanging order of expectation. 
 
Now, $\forall h_1, h_2\in \mathcal{H}_{i}$ with $d(h_1,h_2)>0$, we have $d(h_1,h_2)\ge \delta$ for some absolute constant $\delta>0$. There exists a countable dense subset $\mathcal{S}$ of all binary measurable functions over $\mathbb{R}^d$ under our (pseudo)metric $d(\cdot,\cdot)$, since Borel $\sigma$-algebra on $\mathbb{R}^d$ is countably generated thus separable. We can associate each $h\in \mathcal{H}_{i}$ an unique element in $\mathcal{S}$ by triangle inequality. Thus, the elements in $\mathcal{H}_{i}$ can be partitioned into countably many collections such that all elements within one collection have metric zero. This establishes the necessary condition.
\end{proof}

\end{appendix}

\bibliography{eas.bib}

\begin{thebibliography}{32}

\bibitem{antos1998strong}
\begin{barticle}[author]
\bauthor{\bsnm{Antos},~\bfnm{Andr{\'a}s}\binits{A.}} \AND \bauthor{\bsnm{Lugosi},~\bfnm{G{\'a}bor}\binits{G.}}
(\byear{1998}).
\btitle{Strong minimax lower bounds for learning}.
\bjournal{Machine learning}
\bvolume{30}
\bpages{31--56}.
\end{barticle}
\endbibitem

\bibitem{athreya2006measure}
\begin{bbook}[author]
\bauthor{\bsnm{Athreya},~\bfnm{Krishna~B}\binits{K.~B.}} \AND \bauthor{\bsnm{Lahiri},~\bfnm{Soumendra~N}\binits{S.~N.}}
(\byear{2006}).
\btitle{Measure theory and probability theory}.
\bpublisher{Springer Science \& Business Media}.
\end{bbook}
\endbibitem

\bibitem{barzdicnvs1972prediction}
\begin{barticle}[author]
\bauthor{\bsnm{Barzdi{\c{n}}{\v{s}}},~\bfnm{J{\=a}nis~Martynovich}\binits{J.~M.}} \AND \bauthor{\bsnm{Freivald},~\bfnm{RV}\binits{R.}}
(\byear{1972}).
\btitle{On the prediction of general recursive functions}.
\bjournal{Doklady Akademii Nauk}
\bvolume{206}
\bpages{521--524}.
\end{barticle}
\endbibitem

\bibitem{benedek1994nonuniform}
\begin{barticle}[author]
\bauthor{\bsnm{Benedek},~\bfnm{Gyora~M}\binits{G.~M.}} \AND \bauthor{\bsnm{Itai},~\bfnm{Alon}\binits{A.}}
(\byear{1994}).
\btitle{Nonuniform learnability}.
\bjournal{Journal of Computer and System Sciences}
\bvolume{48}
\bpages{311--323}.
\end{barticle}
\endbibitem

\bibitem{blum1975toward}
\begin{barticle}[author]
\bauthor{\bsnm{Blum},~\bfnm{Lenore}\binits{L.}} \AND \bauthor{\bsnm{Blum},~\bfnm{Manuel}\binits{M.}}
(\byear{1975}).
\btitle{Toward a mathematical theory of inductive inference}.
\bjournal{Information and control}
\bvolume{28}
\bpages{125--155}.
\end{barticle}
\endbibitem

\bibitem{blumer1989learnability}
\begin{barticle}[author]
\bauthor{\bsnm{Blumer},~\bfnm{Anselm}\binits{A.}}, \bauthor{\bsnm{Ehrenfeucht},~\bfnm{Andrzej}\binits{A.}}, \bauthor{\bsnm{Haussler},~\bfnm{David}\binits{D.}} \AND \bauthor{\bsnm{Warmuth},~\bfnm{Manfred~K}\binits{M.~K.}}
(\byear{1989}).
\btitle{Learnability and the {V}apnik--{C}hervonenkis dimension}.
\bjournal{Journal of the ACM (JACM)}
\bvolume{36}
\bpages{929--965}.
\end{barticle}
\endbibitem

\bibitem{bousquet2020theory}
\begin{barticle}[author]
\bauthor{\bsnm{Bousquet},~\bfnm{Olivier}\binits{O.}}, \bauthor{\bsnm{Hanneke},~\bfnm{Steve}\binits{S.}}, \bauthor{\bsnm{Moran},~\bfnm{Shay}\binits{S.}}, \bauthor{\bparticle{van} \bsnm{Handel},~\bfnm{Ramon}\binits{R.}} \AND \bauthor{\bsnm{Yehudayoff},~\bfnm{Amir}\binits{A.}}
(\byear{2020}).
\btitle{A Theory of Universal Learning}.
\bjournal{arXiv:2011{.}04483}.
\end{barticle}
\endbibitem

\bibitem{cover1973determining}
\begin{barticle}[author]
\bauthor{\bsnm{Cover},~\bfnm{Thomas~M}\binits{T.~M.}} \betal{et~al.}
(\byear{1973}).
\btitle{On determining the irrationality of the mean of a random variable}.
\bjournal{The annals of Statistics}
\bvolume{1}
\bpages{862--871}.
\end{barticle}
\endbibitem

\bibitem{dembo1994topological}
\begin{barticle}[author]
\bauthor{\bsnm{Dembo},~\bfnm{Amir}\binits{A.}} \AND \bauthor{\bsnm{Peres},~\bfnm{Yuval}\binits{Y.}}
(\byear{1994}).
\btitle{A topological criterion for hypothesis testing}.
\bjournal{The Annals of Statistics}
\bpages{106--117}.
\end{barticle}
\endbibitem

\bibitem{gold1967language}
\begin{barticle}[author]
\bauthor{\bsnm{Gold},~\bfnm{E~Mark}\binits{E.~M.}}
(\byear{1967}).
\btitle{Language identification in the limit}.
\bjournal{Information and control}
\bvolume{10}
\bpages{447--474}.
\end{barticle}
\endbibitem

\bibitem{kiefer1958deviations}
\begin{barticle}[author]
\bauthor{\bsnm{Kiefer},~\bfnm{Jack}\binits{J.}} \AND \bauthor{\bsnm{Wolfowitz},~\bfnm{Jacob}\binits{J.}}
(\byear{1958}).
\btitle{On the deviations of the empiric distribution function of vector chance variables}.
\bjournal{Transactions of the American Mathematical Society}
\bvolume{87}
\bpages{173--186}.
\end{barticle}
\endbibitem

\bibitem{koplowitz1995cover}
\begin{barticle}[author]
\bauthor{\bsnm{Koplowitz},~\bfnm{Jack}\binits{J.}}, \bauthor{\bsnm{Steif},~\bfnm{Jeffrey~E}\binits{J.~E.}} \AND \bauthor{\bsnm{Nerman},~\bfnm{Olle}\binits{O.}}
(\byear{1995}).
\btitle{On Cover's consistent estimator}.
\bjournal{Scandinavian journal of statistics}
\bpages{395--397}.
\end{barticle}
\endbibitem

\bibitem{kulkarni1994paradigm}
\begin{barticle}[author]
\bauthor{\bsnm{Kulkarni},~\bfnm{Sanjeev~R}\binits{S.~R.}} \AND \bauthor{\bsnm{Tse},~\bfnm{David N.~C.}\binits{D.~N.~C.}}
(\byear{1994}).
\btitle{A paradigm for class identification problems}.
\bjournal{IEEE Transactions on Information Theory}
\bvolume{40}
\bpages{696--705}.
\end{barticle}
\endbibitem

\bibitem{lecam1973convergence}
\begin{barticle}[author]
\bauthor{\bsnm{Le~Cam},~\bfnm{Lucien}\binits{L.}}
(\byear{1973}).
\btitle{Convergence of estimates under dimensionality restrictions}.
\bjournal{The Annals of Statistics}
\bvolume{1}
\bpages{38--53}.
\end{barticle}
\endbibitem

\bibitem{leshem2006cover}
\begin{binproceedings}[author]
\bauthor{\bsnm{Leshem},~\bfnm{Amir}\binits{A.}}
(\byear{2006}).
\btitle{Cover's test of rationality revisited: Computability aspects of hypothesis testing}.
In \bbooktitle{2006 IEEE 24th Convention of Electrical \& Electronics Engineers in Israel}
\bpages{213--216}.
\bpublisher{IEEE}.
\end{binproceedings}
\endbibitem

\bibitem{linial1991results}
\begin{barticle}[author]
\bauthor{\bsnm{Linial},~\bfnm{Nathan}\binits{N.}}, \bauthor{\bsnm{Mansour},~\bfnm{Yishay}\binits{Y.}} \AND \bauthor{\bsnm{Rivest},~\bfnm{Ronald~L}\binits{R.~L.}}
(\byear{1991}).
\btitle{Results on learnability and the {V}apnik-{C}hervonenkis dimension}.
\bjournal{Information and Computation}
\bvolume{90}
\bpages{33--49}.
\end{barticle}
\endbibitem

\bibitem{littlestone1994weighted}
\begin{barticle}[author]
\bauthor{\bsnm{Littlestone},~\bfnm{Nick}\binits{N.}} \AND \bauthor{\bsnm{Warmuth},~\bfnm{Manfred~K}\binits{M.~K.}}
(\byear{1994}).
\btitle{The weighted majority algorithm}.
\bjournal{Information and computation}
\bvolume{108}
\bpages{212--261}.
\end{barticle}
\endbibitem

\bibitem{massart1990tight}
\begin{barticle}[author]
\bauthor{\bsnm{Massart},~\bfnm{Pascal}\binits{P.}}
(\byear{1990}).
\btitle{The tight constant in the Dvoretzky-Kiefer-Wolfowitz inequality}.
\bjournal{The annals of Probability}
\bpages{1269--1283}.
\end{barticle}
\endbibitem

\bibitem{naaman2016almost}
\begin{barticle}[author]
\bauthor{\bsnm{Naaman},~\bfnm{Michael}\binits{M.}}
(\byear{2016}).
\btitle{Almost sure hypothesis testing and a resolution of the {J}effreys-{L}indley paradox}.
\bjournal{Electronic Journal of Statistics}
\bvolume{10}
\bpages{1526--1550}.
\end{barticle}
\endbibitem

\bibitem{naaman2021tight}
\begin{barticle}[author]
\bauthor{\bsnm{Naaman},~\bfnm{Michael}\binits{M.}}
(\byear{2021}).
\btitle{On the tight constant in the multivariate {D}voretzky--{K}iefer--{W}olfowitz inequality}.
\bjournal{Statistics \& Probability Letters}
\bvolume{173}
\bpages{109088}.
\end{barticle}
\endbibitem

\bibitem{rudin1974real}
\begin{barticle}[author]
\bauthor{\bsnm{Rudin},~\bfnm{Walter}\binits{W.}}
(\byear{1974}).
\btitle{Real and Complex Analysis}.
\bjournal{McGraw Hill Inc.,}.
\end{barticle}
\endbibitem

\bibitem{rudin2006functional}
\begin{bbook}[author]
\bauthor{\bsnm{Rudin},~\bfnm{W.}\binits{W.}}
(\byear{2006}).
\btitle{Functional Analysis}.
\bseries{International series in pure and applied mathematics}.
\bpublisher{McGraw-Hill}.
\end{bbook}
\endbibitem

\bibitem{JMLR:v16:santhanam15a}
\begin{barticle}[author]
\bauthor{\bsnm{Santhanam},~\bfnm{Narayana}\binits{N.}} \AND \bauthor{\bsnm{Anantharam},~\bfnm{Venkat}\binits{V.}}
(\byear{2015}).
\btitle{Agnostic Insurability of Model Classes}.
\bjournal{Journal of Machine Learning Research}
\bvolume{16}
\bpages{2329-2355}.
\end{barticle}
\endbibitem

\bibitem{schuurmans1997characterizing}
\begin{barticle}[author]
\bauthor{\bsnm{Schuurmans},~\bfnm{Dale}\binits{D.}}
(\byear{1997}).
\btitle{Characterizing rational versus exponential learning curves}.
\bjournal{Journal of computer and system sciences}
\bvolume{55}
\bpages{140--160}.
\end{barticle}
\endbibitem

\bibitem{shawe1998structural}
\begin{barticle}[author]
\bauthor{\bsnm{Shawe-Taylor},~\bfnm{John}\binits{J.}}, \bauthor{\bsnm{Bartlett},~\bfnm{Peter~L}\binits{P.~L.}}, \bauthor{\bsnm{Williamson},~\bfnm{Robert~C}\binits{R.~C.}} \AND \bauthor{\bsnm{Anthony},~\bfnm{Martin}\binits{M.}}
(\byear{1998}).
\btitle{Structural risk minimization over data-dependent hierarchies}.
\bjournal{IEEE transactions on Information Theory}
\bvolume{44}
\bpages{1926--1940}.
\end{barticle}
\endbibitem

\bibitem{van2003probability}
\begin{barticle}[author]
\bauthor{\bsnm{Van~Gaans},~\bfnm{Onno}\binits{O.}}
(\byear{2003}).
\btitle{Probability measures on metric spaces}.
\bjournal{Lecture notes}.
\end{barticle}
\endbibitem

\bibitem{vapnik2013nature}
\begin{bbook}[author]
\bauthor{\bsnm{Vapnik},~\bfnm{Vladimir~N.}\binits{V.~N.}}
(\byear{2013}).
\btitle{The nature of statistical learning theory}.
\bpublisher{Springer science \& business media}.
\end{bbook}
\endbibitem

\bibitem{vcnonuniform}
\begin{barticle}[author]
\bauthor{\bsnm{Vapnik},~\bfnm{Vladimir~N.}\binits{V.~N.}} \AND \bauthor{\bsnm{Chervonenkis},~\bfnm{Alexey~Y.}\binits{A.~Y.}}
(\byear{1974}).
\btitle{Theory of pattern recognition}.
\end{barticle}
\endbibitem

\bibitem{von1965inequalities}
\begin{barticle}[author]
\bauthor{\bparticle{von} \bsnm{Bahr},~\bfnm{Bengt}\binits{B.}}, \bauthor{\bsnm{Esseen},~\bfnm{Carl-Gustav}\binits{C.-G.}} \betal{et~al.}
(\byear{1965}).
\btitle{Inequalities for the {$ r $} th Absolute Moment of a Sum of Random Variables, {$1\leq r\leq 2$}}.
\bjournal{The Annals of Mathematical Statistics}
\bvolume{36}
\bpages{299--303}.
\end{barticle}
\endbibitem

\bibitem{wu2019isit}
\begin{binproceedings}[author]
\bauthor{\bsnm{Wu},~\bfnm{Changlong}\binits{C.}} \AND \bauthor{\bsnm{Santhanam},~\bfnm{Narayana}\binits{N.}}
(\byear{2019}).
\btitle{Being correct eventually almost surely}.
In \bbooktitle{Information Theory (ISIT), 2019 IEEE International Symposium on}
\bpages{1989--1993}.
\bpublisher{IEEE}.
\end{binproceedings}
\endbibitem

\bibitem{wu2021non}
\begin{binproceedings}[author]
\bauthor{\bsnm{Wu},~\bfnm{Changlong}\binits{C.}} \AND \bauthor{\bsnm{Santhanam},~\bfnm{Narayana}\binits{N.}}
(\byear{2021}).
\btitle{Non-uniform Consistency of Online Learning with Random Sampling}.
In \bbooktitle{Algorithmic Learning Theory (ALT)}
\bpages{1265--1285}.
\bpublisher{PMLR}.
\end{binproceedings}
\endbibitem

\bibitem{zeugmann2008learning}
\begin{barticle}[author]
\bauthor{\bsnm{Zeugmann},~\bfnm{Thomas}\binits{T.}} \AND \bauthor{\bsnm{Zilles},~\bfnm{Sandra}\binits{S.}}
(\byear{2008}).
\btitle{Learning recursive functions: A survey}.
\bjournal{Theoretical Computer Science}
\bvolume{397}
\bpages{4--56}.
\end{barticle}
\endbibitem

\end{thebibliography}
\bibliographystyle{imsart-number}

\end{document}